\def\thmsection{section}
\def\thmchangesection{changesection}
\def\thmchangechapter{changechapter}
\def\thmchange{change}
\def\thmplain{plain}
  \theoremstyle{break-italic}
  \newtheorem{satz}{Satz}
    \theoremstyle{break-italic}
    \newtheorem{satz}{Satz}[section]
      \theoremstyle{break-italic}
      \newtheorem{satz}{Satz}
         \theoremstyle{break-italic}
         \newtheorem{satz}{Satz}[section]
           \theoremstyle{break-italic}
           \newtheorem{satz}{Satz}[chapter]
             \theoremstyle{break-italic}
             \newtheorem{satz}{Satz}[section]
            \theoremstyle{break-italic}
            \newtheorem{satz}{Satz}[section]
\theoremstyle{break-italic}
\newtheorem{theorem}[satz]{Theorem}
\newtheorem{lemma}[satz]{Lemma}
\newtheorem{corollary}[satz]{Corollary}
\newtheorem{Proposition}[satz]{Proposition}
\newtheorem*{conjecture*}{Conjecture}
\theoremstyle{break-roman}
\newtheorem{definition}[satz]{Definition}
\newtheorem{example}[satz]{Example}
\newtheorem{remark}[satz]{Remark}
\newtheorem{conjecture}[satz]{Conjecture}
\theoremstyle{standard}
\newtheorem*{claim}{Claim}
\theoremstyle{varthm-roman}
\newtheorem*{varthm-roman}{}
\theoremstyle{varthm-italic}
\newtheorem*{varthm-italic}{}
\theoremstyle{varthm-roman-break}
\newtheorem*{varthm-roman-break}{}
\theoremstyle{varthm-italic-break}
\newtheorem*{varthm-italic-break}{}
\theoremstyle{varthm-roman-no-punctuation}
\newtheorem{varthm-roman-no-punctuation-numbered}[satz]{}
\theoremstyle{varthm-italic-no-punctuation}
\newtheorem{varthm-italic-no-punctuation-numbered}[satz]{}
\newenvironment{varthm-roman-numbered}[1]{
  \begin{varthm-roman-no-punctuation-numbered}
    \mbox{\rm\textbf{#1}}
  }{\end{varthm-roman-no-punctuation-numbered}}
\newenvironment{varthm-italic-numbered}[1]{
  \begin{varthm-italic-no-punctuation-numbered}
    \mbox{\rm\textbf{#1}}
  }{\end{varthm-italic-no-punctuation-numbered}}
\newenvironment{varthm-roman-break-numbered}[1]{
  \begin{varthm-roman-no-punctuation-numbered}
    \mbox{\rm\textbf{#1}\newline}
  }{\end{varthm-roman-no-punctuation-numbered}}
\newenvironment{varthm-italic-break-numbered}[1]{
  \begin{varthm-italic-no-punctuation-numbered}
    \mbox{\rm\textbf{#1}}\newline
  }{\end{varthm-italic-no-punctuation-numbered}}
\numberwithin{equation}{section}
\def\ex{\begin{example}
  }
  \def\eex{\end{example}}
\def\thr{\begin{theorem}}
\def\ethr{\end{theorem}}
\def\pro{\begin{Proposition}}
\def\epro{\end{Proposition}}
\def\coro{\begin{corollary}}
\def\ecoro{\end{corollary}}
\def\df{\begin{definition}}
\def\edf{\end{definition}}
\def\lm{\begin{lemma}}
\def\elm{\end{lemma}}
\def\pf{\begin{proof}}
\def\epf{\end{proof}}
\def\problem{\begin{problem}}
\def\eproblem{\end{problem}}
\def\ite{\begin{itemize}}
\def\hite{\end{itemize}}
\def\rem{\begin{remark}}
\def\erem{\end{remark}}
\def\cla{\begin{claim}}
\def\ecla{\end{claim}}
\def\conj{\begin{conjecture}}
\def\econj{\end{conjecture}}
\def\eex{{\accent"5E e}\kern-.385em\raise.2ex\hbox{\char'23}\kern-.08em}
\def\EES{{\accent"5E E}\kern-.5em\raise.8ex\hbox{\char'23 }}
\def\ow{o\kern-.42em\raise.82ex\hbox{
\vrule width .12em height .0ex depth .075ex \kern-0.16em \char'56}\kern-.07em}
\def\OW{O\kern-.460em\raise1.36ex\hbox{
\vrule width .13em height .0ex depth .075ex \kern-0.16em \char'56}\kern-.07em}
\def\hoa{\mathcal}
\begin{document}

\title[Applications of Scherer-Hol's theorem ]{Some applications of Scherer-Hol's theorem for polynomial matrices}

\author{Trung Hoa Dinh}
\address{Trung Hoa Dinh, Department of Mathematics, Troy University, Troy, AL 36082, United States}
\email{thdinh@troy.edu}

\author{Toan Minh Ho }
\address{Minh Toan Ho, Institute of Mathematics, VAST, 18 Hoang Quoc Viet, Hanoi, Vietnam}
\email{hmtoan@math.ac.vn}

\author{Cong Trinh Le }
\address{Department of Mathematics, Quy Nhon University, 170 An Duong Vuong, Quy Nhon, Binh Dinh, Vietnam}
\email{lecongtrinh@qnu.edu.vn}

\subjclass[2010]{15A48, 15A54, 11E25, 13J30, 14P10}

\date{\today}


\keywords{Polynomial matrix;  Scherer-Hol's theorem; Positivstellensatz; P\'olya's theorem; Putinar-Vasilescu's theorem; approximate non-negative polynomial}

\begin{abstract} In this paper we establish some applications of the Scherer-Hol's theorem  for polynomial matrices. Firstly, we give a representation for polynomial matrices positive definite on subsets of compact polyhedra.  Then we establish a Putinar-Vasilescu Positivstellensatz for homogeneous and non-homogeneous polynomial matrices. Next  we propose  a matrix version of the P\'olya-Putinar-Vasilescu Positivstellensatz. Finally, we approximate positive semi-definite polynomial matrices using sums of squares.
\end{abstract}

\maketitle
\section{Introduction}

Let $\mathbb R [X]:=\mathbb R[X_1,\ldots,X_n]$ denote the (commutative) algebra  of polynomials in $n$ variables $X_1,\ldots, X_n$ with real coefficients. For a fix integer $t>0$, we denote by $\mathcal{M}_t(\mathbb R[X])$ the algebra of $t\times t$ matrices with entries in $\mathbb R [X]$, and by $\mathcal{S}_t(\mathbb R[X])$ the subalgebra of symmetric matrices. Each element $\bold{A} \in  \hoa{M}_t(\mathbb R[X])$ is a matrix whose entries are polynomials in $\mathbb R[X]$, which is called  a \emph{polynomial matrix}.

For every subset $\mathcal{G}$ of $ \mathcal{S}_t(\mathbb R[X])$ we associate to the set 
$$K(\mathcal{G}):=\{x\in \mathbb R^{n}| \bold{G}(x) \geq 0, \forall \bold{G}\in \mathcal{G}\}. $$
Here the notation $\bold{G}(x) \geq 0$ means that the matrix $\bold{G}(x)$ is positive semi-definite, i.e. $v^T\bold{G}(x)v \geq 0$ for every vector $v\in \mathbb R^t$.  For $x\in \mathbb R^n$, the notation $\bold{G}(x)>0$ means that the matrix $\bold{G}(x)$ is positive  definite, i.e. $v^T\bold{G}(x)v > 0$ for every vector $v\in \mathbb R^t\setminus \{0\}$.\\
In particular, for a subset $G$ of $\mathbb R[X]$, 
$$ K(G)=\{x\in \mathbb R^n| g(x) \geq 0, \forall g\in G\}. $$

A result which represents positive polynomials on $K(G)$ is called a \textit{Positivstellensatz}.  P\'olya's Positivstellensatz (1928) represents homogenoeus polynomials  which are positive  on the orthant $\mathbb R_+^n\setminus \{0\}$.  Another Positivstellensatz "with denominators" was given by Krivine (1964) and Stengle (1974), which yields also a  proof for Artin's theorem on Hilbert's $17^{th}$ problem.   The first  "denominator-free" Positivstellensatz was discovered  by Schm\"udgen (1991, \cite{Schm4}). Some other "denominator-free" Positivstellens\"atze were followed by Putinar (1993, \cite{Pu}),  Schweighofer (2006, \cite{Schw}), etc. 

Handelman's Positivstellensatz (1988) represents positive  polynomials  on convex, compact polyhedra with non-empty interiors. Putinar and Vasilescu (1999, \cite{PV}) proposed a Positivstellensatz  for polynomials positive on $K(G)\setminus \{0\}$. Dickinson and Povh (2015, \cite{DP}) combined the P\'olya and the Putinar-Vasilescu theorems to establish a representation for  homogeneous polynomials positive on the intersection  $ \mathbb R_+^n \cap K(G) \setminus \{0\}$, which is called the \textit{P\'olya-Putinar-Vasilescu Positivstellensatz} in this paper.

A result which represents non-negative polynomials on $K(G)$ is called a \textit{Nichtnegativstellensatz}. A Nichtnegativstellensatz "with denominator" was given also by  Krivine (1964) and Stengle (1974).  Some other Nichtnegativstellens\"atze were discovered by Scheiderer (\cite{Sch1, Sch2}). In particular, Marshall (2003, \cite{Mar}) approximated non-negative polynomials on $K(G)$ using sums of squares.

A   version of   P\'{o}lya's   Positivstellensatz for polynomial matrices  was given by Scherer and Hol (2006, \cite{SchH}), with  applications e.g. in robust polynomial semi-definite programs.  Schm\"udgen's theorem for operator polynomials was discovered  by Cimprič and Zalar \cite{CZ}.  Handelman's Positivstellensatz for polynomial matrices was  studied in \cite{LeB}. Some other  Positivstellens\"atze for polynomial matrices were studied in \cite{Le}, with matrix  denominators.

A version of Putniar's Positivstellensatz for polynomial matrices was also given by Scherer and Hol (\cite{SchH}), see also in \cite[Theorem 13]{KSchw}.

\begin{theorem}\label{Scherer-Hol} Let $\mathcal{Q} \subseteq \mathcal{S}_t(\mathbb R[X])$ be an Archimedean quadratic module and $\bold{F} \in \mathcal{S}_t(\mathbb R[X])$.   If $\bold{F}(x) >0$ for all $x\in K(\mathcal{Q})$, then $\bold{F}\in \mathcal{Q}$.
\end{theorem}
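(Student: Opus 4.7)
The plan is to adapt the classical Hahn--Banach plus Gelfand--Naimark--Segal (GNS) proof of Putinar's scalar Positivstellensatz, replacing linear functionals on $\mathbb{R}[X]$ by linear functionals on $\mathcal{S}_t(\mathbb{R}[X])$ and scalar multiplications by their matrix-valued analogues acting on the free module $\mathbb{R}[X]^t$.

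First I would argue by contradiction, assuming $\mathbf{F} \notin \mathcal{Q}$. The Archimedean hypothesis provides a uniform degree-by-degree bound showing that $\mathcal{Q}$, intersected with any finite-dimensional slice $\mathcal{S}_t(\mathbb{R}[X])_{\leq d}$, is closed in the natural Euclidean topology. Hahn--Banach then yields a linear functional $L : \mathcal{S}_t(\mathbb{R}[X]) \to \mathbb{R}$ satisfying $L(\mathbf{G}) \geq 0$ for every $\mathbf{G} \in \mathcal{Q}$ and $L(\mathbf{F}) \leq 0$. After rescaling we may assume $L(I_t) = 1$.

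Next I would feed $L$ into a GNS-style construction on $\mathbb{R}[X]^t$. Setting $\langle p, q\rangle := \tfrac{1}{2} L(pq^T + qp^T)$ gives a positive semi-definite bilinear form, because every rank-one symmetric matrix $pp^T$ lies in $\mathcal{Q}$; quotienting by its kernel and completing yields a Hilbert space $H$. The multiplications by $X_1, \dots, X_n$ descend to pairwise commuting symmetric operators on $H$, and the Archimedean bound $N I_t - (\sum_i X_i^2) I_t \in \mathcal{Q}$ forces them to be bounded, hence self-adjoint. The joint spectral theorem produces a projection-valued measure $E$ on $\mathbb{R}^n$, and testing positivity of $L$ against elements of the form $\mathbf{P}^T \mathbf{G} \mathbf{P}$ with $\mathbf{G} \in \mathcal{Q}$ and $\mathbf{P} \in \mathcal{M}_t(\mathbb{R}[X])$ confines the support of $E$ to $K(\mathcal{Q})$. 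Since $K(\mathcal{Q})$ is compact (a further consequence of Archimedeaness) and $\mathbf{F}$ is continuous and strictly positive definite there, some $\varepsilon > 0$ satisfies $\mathbf{F}(x) - \varepsilon I_t \geq 0$ on $K(\mathcal{Q})$; integrating against $E$ gives $L(\mathbf{F}) \geq \varepsilon L(I_t) = \varepsilon > 0$, contradicting $L(\mathbf{F}) \leq 0$.

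The main obstacle I expect is the GNS step in the matrix setting. The pre-inner product must be engineered so that matrix-valued positivity of $L$ on $pp^T$ translates correctly into positive semi-definiteness of the scalar form, and one must verify that the resulting spectral integral really recovers $L$ on all of $\mathcal{S}_t(\mathbb{R}[X])$ and not merely on scalar multiples of $I_t$. A secondary difficulty is the closure step: the scalar argument uses a reference seminorm built from an auxiliary positive functional, and transferring that device to matrices depends precisely on the scalarization $p^T \mathbf{A} p \in \mathbb{R}[X]$ for $p \in \mathbb{R}[X]^t$ together with the Archimedean bound on $\sum_i X_i^2$.
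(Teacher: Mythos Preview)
The paper does not prove Theorem~\ref{Scherer-Hol} at all: it is quoted as a known result from Scherer--Hol \cite{SchH} (see also Klep--Schweighofer \cite[Theorem~13]{KSchw}), and the entire paper is devoted to \emph{applying} it, not establishing it. So there is no ``paper's own proof'' against which to compare your proposal.

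That said, your outline is broadly the right shape and is close in spirit to the Klep--Schweighofer treatment the paper cites: separate $\mathbf{F}$ from the module by a nonnegative functional, build a Hilbert space representation, and use the Archimedean bound to get bounded commuting self-adjoints whose joint spectrum sits in $K(\mathcal{Q})$. A couple of points deserve more care than you give them. First, the separation step: Hahn--Banach on an infinite-dimensional space does not automatically give $L(\mathbf{F})<0$ with $L(\mathbf{I})>0$; one standard fix is to work in the finest locally convex topology and invoke the Archimedean property to show that any nonzero $L\geq 0$ on $\mathcal{Q}$ must have $L(\mathbf{I})>0$ (since $N\mathbf{I}\pm\mathbf{G}\in\mathcal{Q}$ forces $L\equiv 0$ otherwise). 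Second, and this is the point you flag yourself, recovering $L(\mathbf{F})$ from the spectral integral is exactly where the matrix case bites: the cyclic vector in the GNS space is a column in $\mathbb{R}[X]^t$, and you need to check that the spectral representation reproduces $L$ on all of $\mathcal{S}_t(\mathbb{R}[X])$, not just on scalars times $\mathbf{I}$. The cleanest way around this in the literature is the pure-state argument of \cite{KSchw}, which avoids the full GNS machinery by restricting to extremal states and showing they are point evaluations composed with vector states; you might find that route easier to make rigorous than the direct spectral-measure reconstruction you sketch.
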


A direct consequence of the Scherer-Hol theorem is the following
\begin{corollary} \label{coroScherer-Hol} Let $\mathcal{Q} \subseteq \mathcal{S}_t(\mathbb R[X])$ be an Archimedean quadratic module and $\bold{F} \in \mathcal{S}_t(\mathbb R[X])$.   If $\bold{F}(x) \geq 0$ for all $x\in K(\mathcal{Q})$, then $\bold{F}+\epsilon \bold{I}\in \mathcal{Q}$ for all $\epsilon >0$.
\end{corollary}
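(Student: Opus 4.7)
The plan is to apply Theorem \ref{Scherer-Hol} directly to the perturbed matrix $\bold{F}+\epsilon \bold{I}$ and exploit the elementary fact that adding a positive multiple of the identity turns a positive semi-definite matrix into a positive definite one.

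First I would fix an arbitrary $\epsilon>0$ and observe that $\bold{F}+\epsilon \bold{I}\in \mathcal{S}_t(\mathbb R[X])$, since $\bold{F}$ is symmetric and so is $\bold{I}$. Next, for any $x\in K(\mathcal{Q})$ and any nonzero $v\in \mathbb R^t$, the hypothesis $\bold{F}(x)\ge 0$ gives $v^T\bold{F}(x)v\ge 0$, hence
\[
v^T\bigl(\bold{F}(x)+\epsilon \bold{I}\bigr)v \;=\; v^T\bold{F}(x)v + \epsilon\, v^Tv \;\ge\; \epsilon\, \|v\|^2 \;>\; 0.
\]
This shows $(\bold{F}+\epsilon \bold{I})(x)>0$ on all of $K(\mathcal{Q})$.

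With the strict positivity in hand, Theorem \ref{Scherer-Hol} applied to the symmetric polynomial matrix $\bold{F}+\epsilon \bold{I}$ and the Archimedean quadratic module $\mathcal{Q}$ yields $\bold{F}+\epsilon \bold{I}\in \mathcal{Q}$, which is exactly the conclusion. Since $\epsilon>0$ was arbitrary, the corollary follows. There is essentially no obstacle here: the argument is a one-line perturbation trick, with all the work carried by Theorem \ref{Scherer-Hol}; the only thing to check is that the scalar-to-matrix passage $\epsilon \mapsto \epsilon \bold{I}$ correctly converts a semi-definite inequality into a definite one, which is immediate from $v^T(\epsilon \bold{I})v=\epsilon\|v\|^2$.
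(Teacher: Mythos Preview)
Your argument is correct and is exactly the ``direct consequence'' the paper has in mind: perturb $\bold{F}$ by $\epsilon\bold{I}$ to obtain strict positive definiteness on $K(\mathcal{Q})$, then invoke Theorem~\ref{Scherer-Hol}. There is nothing to add.
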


The main aim of this paper is to apply  the Scherer-Hol theorem  (Theorem \ref{Scherer-Hol} and its consequence, Corollary \ref{coroScherer-Hol})  to establish some Positivstellens\"atze (resp. Nichtnegativstellens\"atze) for polynomial matrices. More precisely,   we establish firstly in Section 3  a representation for polynomial matrices  positive definite on subsets of compact polyhedra. Next, in Section 4 we establish a Putinar-Vasilescu Positivstellensatz for homogeneous and non-homogeneous polynomial matrices, which also yields  a matrix version of Reznick's Positivstellensatz.  We propose in Section 5  a matrix version of the P\'olya-Putinar-Vasilescu Positivstellensatz. Finally, in Section 6 we propose a version of the Marshall theorem for polynomial matrices, approximating positive semi-definite polynomial matrices using sums of squares.

\section{Preliminaries}
In this section we shall recall some basis concepts and  facts in Real algebraic geometry for matrices over  commutative rings which are  proposed by Schm\"{u}dgen (\cite{Schm1}, \cite{Schm2}, \cite{Schm3}) and Cimprič (\cite{Ci1}, \cite{Ci2}).  

Let $\mathbb R [X]:=\mathbb R[X_1,\ldots,X_n]$ denote the (commutative) algebra  of polynomials in $n$ variables $X_1,\ldots, X_n$ with real coefficients. For a fix integer $t>0$, we denote by $\mathcal{M}_t(\mathbb R[X])$ the algebra of $t\times t$ matrices with entries in $\mathbb R [X]$, and by $\mathcal{S}_t(\mathbb R[X])$ the subalgebra of symmetric matrices. Each element $\bold{A} \in  \hoa{M}_t(\mathbb R[X])$ is a matrix whose entries are polynomials in $\mathbb R[X]$, which is called a \emph{polynomial matrix}.  $\bold{A}$ is also called a \emph{matrix polynomial},  because it can be viewed as a polynomial in $X_1,\ldots, X_n$ whose coefficients come from $\hoa{M}_t(\mathbb R)$. Namely, we can write $\bold{A}$ as 
$$  \bold{A}=\sum_{|\alpha|=0}^{d} \bold{A}_{\alpha}X^{\alpha}, $$
where $\alpha=(\alpha_1,\cdots,\alpha_n) \in \mathbb N_0^{n}$, $|\alpha|:=\alpha_1+\ldots + \alpha_n$, $X^{\alpha}:=X_1^{\alpha_1}\ldots X_n^{\alpha_n}$, $\bold{A}_\alpha \in \hoa{M}_t(\mathbb R)$, $d$ is the maximum over all degree of the entries of $\bold{A}$ and it is called the \textit{degree} of  the polynomial matrix $\bold{A}$.   To unify notation, throughout the paper each element of $\hoa{M}_t(\mathbb R[X])$ is called a \emph{polynomial matrix}. 

A subset $\mathcal{M}$ of $\hoa{S}_t(\mathbb R [X])$ is called a \textit{quadratic module}  if 
$$ \bold{I} \in \mathcal{M}, \quad \mathcal{M} + \mathcal{M} \subseteq \mathcal{M}, \quad  \bold{A}^T \mathcal{M} \bold{A} \subseteq \mathcal{M}, \forall \bold{A}\in \mathcal{M}_t(\mathbb R[X]).$$
The smallest quadratic module which contains a given subset $\mathcal{G}$ of $\mathcal{S}_t(\mathbb R[X])$ will be denoted by $\mathcal{M}({\mathcal{G}})$. It is clear that 
$$ \mathcal{M}(\mathcal{G}) = \{\sum_{i=1}^r\sum_{j=1}^s\bold{A}_{ij}^T \bold{G}_i \bold{A}_{ij}| r, s\in \mathbb N_0, \bold{G}_i \in \mathcal{G}\cup \{\bold{I}\}, \bold{A}_{ij}\in \mathcal{M}_t(\mathbb R[X])\}. $$
Each element of the form $\bold{A}^T\bold{A}$ is called a \textit{square} in $\mathcal{M}_t(\mathbb R[X])$. The set of all finite sums of squares in $\mathcal{M}_t(\mathbb R[X])$ is denoted by $\sum_t  \mathbb R[X]^2$.   Note that   $\mathcal{M}({\emptyset}) = \sum_t  \mathbb R[X]^2$.

In particular, a subset $M\subseteq \mathbb R[X]$ is called a quadratic module if 
$$1\in M, ~M+M\subseteq M, ~a^2M\subseteq M ~\forall a\in \mathbb R[X].$$
 The smallest quadratic module of $\mathbb R[X]$ which contains a given subset $G\subseteq  \mathbb R[X]$ will be denoted by $M(G)$, and it consists of all  elements  of the form $\sigma_0+\sum_{i=1}^m\sigma_ig_i$, where $m \in \mathbb N$, $g_i\in G,$ and $\sigma\in\sum\mathbb R[X]^2$-the set of finite sums of squares of polynomials in $\mathbb R[X]$. \\
A subset $M\subseteq \mathbb R[X]$ is said to be a \textit{semiring} if 
$$M+M\subseteq M, ~ MM\subseteq M, ~ \mathbb R_{\geq 0} \subseteq  M.$$
For $G=\{g_1,\ldots,g_m\}\subseteq \mathbb R[X]$,  \textit{the semiring generated by $G$} consists of finite sums of  terms of the form 
$$ a_\alpha g_1^{\alpha_1}\ldots g_m^{\alpha_m}, \quad \alpha=(\alpha_1,\ldots, \alpha_m) \in \mathbb N_0^m, a_\alpha \geq 0,$$
and denoted by $P(G)$. 

For a quadratic module  or a semiring $M$ in $ \mathbb R[X]$, denote 
$$ M^t:=\{\sum_{i} m_i \bold{A}_i^T \bold{A}_i | m_i\in M, \bold{A}_i\in \hoa{M}_t( \mathbb R[X])\}. $$
Since $M^t$ contains the set of sums of squares in $\hoa{M}_t(\mathbb R[X])$,  $M^t$ is always  a \textit{quadratic module} on $\hoa{M}_t( \mathbb R[X])$. 

For any matrix $\bold{A}\in \mathcal{M}_t(\mathbb R[X])$, the  notation $\bold {A} \geq 0$ means $\bold{A}$ is \emph{positive semidefinite},  i.e. for each ${x} \in \mathbb R^{n}$, $v^{T}\bold{A}(x)v \geq 0$   for all  $v\in \mathbb R^t$; $\bold {A} >0 $ means $\bold{A}$ is \emph{positive definite}, i.e. for each  ${x} \in \mathbb R^{n}$, $v^{T}\bold{A}(x)v > 0$   for all  $v\in \mathbb R^t \setminus\{0\}$. 

We associate each set $\mathcal{G}\subseteq \mathcal{S}_t(\mathbb R[X])$ to the set 
$$K(\mathcal{G}):=\{x\in \mathbb R^{n}| \bold{G}(x) \geq 0, \forall \bold{G}\in \mathcal{G}\}, $$
which is a basic closed semi-algebraic set in $\mathbb R^n$. 
In particular, for a subset $G$ of $\mathbb R[X]$, 
$$ K(G)=\{x\in \mathbb R^n| g(x) \geq 0, \forall g\in G\}. $$
The following result of Cimprič (\cite{Ci2}) shows that the set ${K}({\hoa{G}})$ can be determined by \emph{scalars}, i.e. by polynomials in $\mathbb R[X]$. 

\begin{lemma}[{\cite[Proposition 5]{Ci2}}] \label{lm-cimpric} Let $\mathcal{G}\subseteq \mathcal{S}_t(\mathbb R[X])$. Then there exists a subset $G$ of $ \mathbb R[X] $ with the following properties:
\begin{itemize}
\item[(1)] ${K}({\mathcal{G}}) = K({G})$;
\item[(2)] $M(G)^{t} \subseteq \mathcal{M}({\hoa{G}}).$
\end{itemize}
Moreover, if $\mathcal{G}$ is finite then $G$ can be chosen to be finite. On the other hand, if $\mathcal{G}$ consists of homogeneous polynomial matrices, then the polynomials in $G$ are also homogeneous. 
\end{lemma}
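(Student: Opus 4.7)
The plan is to attach to every pair $(\bold{G},v)$ with $\bold{G}\in\mathcal{G}$ and $v\in\mathbb{R}^{t}$ the scalar quadratic form
\[
g_{\bold{G},v}(X) := v^{T}\bold{G}(X)v \in \mathbb{R}[X],
\]
and then to take $G:=\{g_{\bold{G},v}:\bold{G}\in\mathcal{G},\ v\in\mathbb{R}^{t}\}$. Property (1) would follow immediately from the very definition of positive semi-definiteness, since $\bold{G}(x)\geq 0$ iff $v^{T}\bold{G}(x)v\geq 0$ for every $v\in\mathbb{R}^{t}$, giving $K(\mathcal{G})=K(G)$. The homogeneity clause drops out at once: if each entry of $\bold{G}$ is homogeneous of common degree $d$, then $v^{T}\bold{G}v$ is homogeneous of degree $d$ for any constant vector $v$, so $G$ can be chosen to consist of homogeneous polynomials.

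The key to property (2) would be the elementary identity
\[
g_{\bold{G},v}(X)\,\bold{I} \;=\; \sum_{i=1}^{t}\bigl(v\,e_{i}^{T}\bigr)^{T}\bold{G}(X)\bigl(v\,e_{i}^{T}\bigr),
\]
where $e_{1},\ldots,e_{t}$ denotes the standard basis of $\mathbb{R}^{t}$ and each summand on the right is manifestly of the form $\bold{A}^{T}\bold{G}\bold{A}$. This would place $g_{\bold{G},v}\,\bold{I}$ in $\mathcal{M}(\mathcal{G})$, whence $g\bold{A}^{T}\bold{A}=\bold{A}^{T}(g\bold{I})\bold{A}\in\mathcal{M}(\mathcal{G})$ for every $g\in G$ and every $\bold{A}\in\mathcal{M}_{t}(\mathbb{R}[X])$. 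Combining this with the trivial remark that $\sigma\bold{A}^{T}\bold{A}=\sum_{k}(p_{k}\bold{A})^{T}(p_{k}\bold{A})\in\mathcal{M}(\mathcal{G})$ for any sum of squares $\sigma=\sum_{k}p_{k}^{2}\in\sum\mathbb{R}[X]^{2}$, and expanding a generic element $\sum_{k}m_{k}\bold{A}_{k}^{T}\bold{A}_{k}\in M(G)^{t}$ via $m_{k}=\sigma_{0,k}+\sum_{i}\sigma_{i,k}g_{i,k}$, would yield the inclusion $M(G)^{t}\subseteq\mathcal{M}(\mathcal{G})$ as required.

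For the finiteness assertion when $\mathcal{G}$ is finite, the first step is to widen the construction by allowing $v$ to be a polynomial vector in $\mathbb{R}[X]^{t}$, since the identity above is insensitive to whether $v$ is constant or polynomial. The task then reduces to selecting, for each $\bold{G}\in\mathcal{G}$, a finite family of polynomial vectors $v_{1}(X),\dots,v_{N}(X)$ whose associated quadratic forms $v_{j}^{T}\bold{G}v_{j}$ already detect the positive semi-definiteness of $\bold{G}(x)$ pointwise. A natural recipe is to combine the standard basis vectors (yielding the diagonal entries of $\bold{G}$), the columns of the adjugates of all principal submatrices (yielding the principal minors multiplied by certain nonnegative cofactors), and a handful of sign combinations that resolve the degenerate locus where several diagonal entries vanish simultaneously; for $t=2$ this is an explicit six-vector verification, and the general case proceeds by induction on $t$. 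The main obstacle is precisely the combinatorial accounting of this finite detecting family, together with the extra care required in the homogeneous case to keep the $v_{j}(X)$ themselves homogeneous so that each $v_{j}^{T}\bold{G}v_{j}$ remains homogeneous.
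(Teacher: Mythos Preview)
The paper does not prove this lemma; it quotes it from Cimpri\v{c} \cite[Proposition~5]{Ci2} and uses it as a black box. There is therefore no in-paper argument to compare your proposal against.

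On its own merits: your construction $G=\{v^{T}\bold{G}v:\bold{G}\in\mathcal{G},\ v\in\mathbb{R}^{t}\}$ and the identity $(v^{T}\bold{G}v)\,\bold{I}=\sum_{i=1}^{t}(ve_{i}^{T})^{T}\bold{G}(ve_{i}^{T})$ are correct and dispatch (1), (2), and the homogeneity clause for this (infinite) $G$. The genuine gap is the finiteness assertion. The plan you sketch --- pass to finitely many polynomial vectors $v_{1}(X),\dots,v_{N}(X)$ whose quadratic forms $v_{j}^{T}\bold{G}v_{j}$ detect positive semidefiniteness of $\bold{G}(x)$ pointwise --- is plausible, and your six-vector verification for $t=2$ does go through. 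But ``the general case proceeds by induction on $t$'' is a promissory note rather than an argument: you have not exhibited the detecting family for arbitrary $t$, shown how the inductive step closes, or verified that the $v_{j}$ can be taken homogeneous when $\bold{G}$ is. You candidly label this ``the main obstacle,'' which is honest, but as written the finiteness clause (and a fortiori the homogeneous-finite case needed in Theorems~\ref{Putinar-Vasilescu} and~\ref{Polya-Putinar-Vasilescu}) is not proved.
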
 

A quadratic module or a semiring $Q$ on $\mathbb R[X]$ (resp. $\mathcal{M}_t(\mathbb R[X])$) is said to be \textit{Archimedean} if for every $f\in \mathbb R[X]$ (resp. $\bold{F}\in \mathcal{M}_t(\mathbb R[X])$), there exists a $\lambda >0$ such that $\lambda \pm f \in Q$ (resp. $\lambda \cdot \bold{I} \pm \bold{F} \in Q$).

\begin{lemma}[{\cite[Lemma 12.7, Coro. 12.8]{Schm2}}] \label{Archimed-semiring}Let $Q $ be a quadratic module or a semiring on $\mathbb R[X_1,\ldots,X_n]$. Then $Q$ is Archimedean if and only if there exists $\lambda > 0$ such that $\lambda \pm X_i \in Q$, for all $i=1,\ldots,n$. 

Moreover, if $Q$ is a quadratic module, then $Q$ is Archimedean if and only if there exists $\lambda > 0$ such that $\lambda - \sum_{i=1}^n X_i^2 \in Q$.
\end{lemma}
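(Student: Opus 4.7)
The plan is to verify the nontrivial (``if'') implication by showing that the set of \emph{$Q$-bounded} polynomials
\[ T := \{\,f \in \mathbb R[X] : \exists\, \lambda > 0 \text{ with } \lambda \pm f \in Q\,\} \]
coincides with $\mathbb R[X]$, which is precisely the Archimedean condition (the ``only if'' direction is immediate upon specializing to $f = X_i$). Every real constant $c$ lies in $T$ since nonnegative reals are squares in $\mathbb R[X]$ and hence in $Q$; the identity $(\lambda+\mu) \pm (f+g) = (\lambda \pm f) + (\mu \pm g)$ shows closure under addition; and the hypothesis $\lambda \pm X_i \in Q$ places each variable in $T$. It therefore suffices to prove that $T$ is closed under multiplication, whereupon $T$ will be a subalgebra of $\mathbb R[X]$ containing $\mathbb R$ and $X_1,\dots,X_n$, hence equal to $\mathbb R[X]$.

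For a semiring $Q$, closure under multiplication follows at once from the polarization-type identities
\[ \mu\nu + fg = \tfrac{1}{2}\bigl[(\mu+f)(\nu+g) + (\mu-f)(\nu-g)\bigr], \qquad \mu\nu - fg = \tfrac{1}{2}\bigl[(\mu+f)(\nu-g) + (\mu-f)(\nu+g)\bigr], \]
each right-hand term lying in $Q$ because a semiring is closed under sums, products, and scaling by nonnegative reals. The main obstacle is the quadratic-module case, where the product of two arbitrary elements of $Q$ need not belong to $Q$. I would first reduce multiplication to squaring using the polarization identity $fg = \tfrac{1}{2}\bigl[(f+g)^2 - f^2 - g^2\bigr]$: together with closure under addition, this reduces matters to showing $f \in T \Rightarrow f^2 \in T$. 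The upper bound $\lambda^2 + f^2 \in Q$ is trivial (sum of a square and a nonnegative constant); the real task is to produce some $m$ with $m - f^2 \in Q$.

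The key algebraic identity is
\[ (\mu+f)^2(\mu-f) + (\mu-f)^2(\mu+f) = (\mu+f)(\mu-f)\bigl[(\mu+f) + (\mu-f)\bigr] = 2\mu(\mu^2 - f^2). \]
Each summand on the left is a square of a polynomial multiplied by an element of $Q$ and thus lies in $Q$; multiplying by the square $\tfrac{1}{2\mu} = \bigl(\tfrac{1}{\sqrt{2\mu}}\bigr)^2$ yields $\mu^2 - f^2 \in Q$, the bound needed for $f^2$. This closes $T$ under squaring, hence under multiplication, and the first equivalence follows. For the ``moreover'' clause one direction is the main equivalence applied to $f = \sum_{i=1}^n X_i^2$, and for the other, $\lambda - \sum_i X_i^2 \in Q$ gives $\lambda - X_i^2 = (\lambda - \sum_j X_j^2) + \sum_{j\neq i} X_j^2 \in Q$, after which completing the square $(\lambda + \tfrac{1}{4}) \pm X_i = (\lambda - X_i^2) + (X_i \mp \tfrac{1}{2})^2 \in Q$ returns us to the hypotheses of the first part.
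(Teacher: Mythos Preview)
The paper does not prove this lemma at all; it is quoted verbatim from Schm\"udgen's book and used as a black box. So there is no ``paper's own proof'' to compare against here.

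Your argument is correct and is in fact essentially the standard proof one finds in the cited reference: introduce the set $T$ of $Q$-bounded elements, observe it is closed under addition and contains constants and the $X_i$, and then show it is closed under multiplication. The semiring case via the polarization identities and the quadratic-module case via the identity $(\mu+f)^2(\mu-f)+(\mu-f)^2(\mu+f)=2\mu(\mu^2-f^2)$ are exactly the right moves.

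One small slip: in the final completing-the-square step you write
\[
\bigl(\lambda+\tfrac14\bigr)\pm X_i=(\lambda-X_i^2)+(X_i\mp\tfrac12)^2,
\]
but the sign in the square should be $\pm$, not $\mp$ (expand to check). This is harmless for the argument---both $(X_i+\tfrac12)^2$ and $(X_i-\tfrac12)^2$ lie in $Q$, so either way you obtain both $(\lambda+\tfrac14)+X_i\in Q$ and $(\lambda+\tfrac14)-X_i\in Q$---but the displayed identity as written is false.
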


\begin{lemma}\label{Archimed-Mn}  Let $M$ be a quadratic module or a semiring on $\mathbb R[X]$. Then $M$ is Archimedean if and only if $M^t$ is Archimedean. Moreover, for a finite subset $G$ of $\mathbb R[X]$, we have 
\begin{equation}
K\big(M(G)^t\big)=K\big(M(G)\big)=K(G)=K\big(P(G)\big)=K\big(P(G)^t\big).
\end{equation}
\end{lemma}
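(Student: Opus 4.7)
The plan is to address the two assertions of the lemma separately: the Archimedean equivalence for a general $M$, and the chain of $K$-set equalities in the ``Moreover'' part for finite $G \subseteq \mathbb R[X]$.

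For the chain $K(M(G)^t) = K(M(G)) = K(G) = K(P(G)) = K(P(G)^t)$, I handle everything by direct double inclusion. Since $G$ sits inside both $M(G)$ and $P(G)$, the inclusions $K(M(G)) \subseteq K(G)$ and $K(P(G)) \subseteq K(G)$ are automatic; conversely, each element of $M(G)$ (of the form $\sigma_0 + \sum \sigma_i g_i$ with $\sigma_i$ sums of squares) or $P(G)$ (of the form $\sum a_\alpha g^\alpha$ with $a_\alpha \geq 0$) is non-negative at every point of $K(G)$. For the matrix sets, the embedding $m \mapsto m\bold I = m \bold I^T \bold I$ places $M(G)$ inside $M(G)^t$ (and $P(G)$ inside $P(G)^t$), giving $K(M(G)^t) \subseteq K(M(G))$ since positive-semi-definiteness of every $m\bold I$ at $x$ forces $m(x) \geq 0$; the reverse inclusion follows because any $\sum_i m_i \bold A_i^T \bold A_i \in M(G)^t$ evaluated at $x \in K(M(G))$ is a sum of non-negative-scalar multiples of positive-semi-definite constant matrices.

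For the forward direction of the Archimedean equivalence, I expand $\bold F \in \hoa{S}_t(\mathbb R[X])$ as $\sum_\alpha X^\alpha \bold F_\alpha$ with $\bold F_\alpha \in \hoa{S}_t(\mathbb R)$ constant symmetric, and bound each monomial term separately. The key input is that for each $\bold F_\alpha$, the real psd matrices $\|\bold F_\alpha\| \bold I \pm \bold F_\alpha$ admit Cholesky factorisations $\bold B_{\alpha,\pm}^T \bold B_{\alpha,\pm}$ with constant $\bold B_{\alpha,\pm}$. Combined with the scalar Archimedean bounds $\lambda_\alpha \pm X^\alpha \in M$ (supplied by Lemma \ref{Archimed-semiring} or the definition), the four products $(\lambda_\alpha \pm X^\alpha)\bold B_{\alpha,\pm}^T \bold B_{\alpha,\pm}$ all lie in $M^t$ by its defining formula; adding an appropriate pair of products cancels the cross-terms and yields $\lambda_\alpha \|\bold F_\alpha\| \bold I \pm X^\alpha \bold F_\alpha \in M^t$. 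Summing over $\alpha$ produces $\mu \bold I \pm \bold F \in M^t$ for $\mu = \sum_\alpha \lambda_\alpha \|\bold F_\alpha\|$, so $M^t$ is Archimedean.

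For the reverse direction, I feed $\pm X_i \bold I$ into the Archimedean property of $M^t$ to obtain $(\lambda \pm X_i)\bold I = \sum_j m_j \bold A_j^T \bold A_j$ with $m_j \in M$. Taking the trace of both sides collapses this to the scalar relation $t(\lambda \pm X_i) = \sum_j m_j \cdot \mathrm{tr}(\bold A_j^T \bold A_j)$, with each $\mathrm{tr}(\bold A_j^T \bold A_j) \in \sum \mathbb R[X]^2$. For a quadratic module, the absorption $\sum \mathbb R[X]^2 \cdot M \subseteq M$ immediately yields $\lambda \pm X_i \in M$, and Lemma \ref{Archimed-semiring} concludes $M$ is Archimedean. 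The semiring case requires a more delicate treatment, since a general semiring $M$ does not absorb multiplication by arbitrary sums of squares: one must re-organise the matrix identity so that each scalar factor multiplying an element of $M$ ends up inside $M$, exploiting the semiring version of Lemma \ref{Archimed-semiring}. This semiring refinement is the main technical obstacle I expect, as the natural trace-based argument hinges on the quadratic-module absorption which is not available in the semiring setting.
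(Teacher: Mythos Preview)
The paper's own proof is a two-line appeal to \cite[Prop.~4]{Le} for the quadratic-module case and to Lemma~\ref{Archimed-semiring} for the semiring case; your argument is far more explicit and self-contained. Your treatment of the $K$-set equalities, of the forward implication (via the monomial expansion and Cholesky factorisation of the constant coefficients), and of the reverse implication in the quadratic-module case (via the trace), are all correct.

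Your hesitation about the semiring reverse direction is well founded: that implication is in fact \emph{false} as stated, so no refinement of your trace argument can rescue it. Take $n=1$ and $M=P(\{1-X^2\})$, the semiring generated by $1-X^2$. Every element of $M$ is a polynomial in $X^2$ and hence an even function of $X$; consequently $\lambda - X\notin M$ for every $\lambda>0$, and by Lemma~\ref{Archimed-semiring} the semiring $M$ is not Archimedean. On the other hand, for any $t\geq 1$ one checks that $M^t=\widehat{M}^t$, where $\widehat{M}$ denotes the quadratic module of $\mathbb R[X]$ generated by $M$ (each $\widehat m\,\bold A^T\bold A$ with $\widehat m=\sum_j a_j^2 m_j$, $m_j\in M$, rewrites as $\sum_j m_j (a_j\bold A)^T(a_j\bold A)\in M^t$). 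Since $1-X^2\in\widehat{M}$, the second clause of Lemma~\ref{Archimed-semiring} makes $\widehat{M}$ Archimedean, and then the (valid) quadratic-module case of the present lemma makes $M^t=\widehat{M}^t$ Archimedean. Thus the obstacle you isolated---a semiring need not absorb multiplication by squares---is precisely what breaks the equivalence.

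This does not affect the rest of the paper: every invocation of Lemma~\ref{Archimed-Mn} in the subsequent sections (Theorem~\ref{compact-polyhedra}, Theorem~\ref{Putinar-Vasilescu}, Lemma~\ref{lm5.3}, Theorem~\ref{Marshall}) uses only the forward direction, which your argument establishes in full.
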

\begin{proof}
For the case $M$ is a quadratic module, the result follows from  \cite[Prop. 4]{Le}. If $M$ is a semiring, the result follows from Lemma \ref{Archimed-semiring}. The latter equalities are straightforward. 
\end{proof}

\section{Polynomial matrices positive definite on subsets of compact polyhedra}
In this section we give an application of the Scherer-Hol theorem   to  represent polynomial matrices which are positive definite on subsets of compact polyhedra.

Let $m$ and $k$ be positive integers with $m\leq k$. Let 
$$G=\{g_1,\ldots, g_k\} \subseteq \mathbb R[X]:=\mathbb R[X_1,\ldots,X_n]$$ such that $g_1,\cdots,g_m$ are linear. Denote $\hat{G}=\{g_1,\ldots,g_m\}$. Note that $K(G) \subseteq K(\hat{G})$. Let $P(G)$ be the semiring generated by $G$. The following result is a matrix version of \cite[Theorem 12.44]{Schm2}.

\begin{theorem} \label{compact-polyhedra} Suppose that $K(\hat{G})$ is non-empty and compact. For  $\bold{F}\in  \mathcal{S}_t(\mathbb R[X])$, if $\bold{F}(x)>0$ for all $x\in K(G)$, then $\bold{F} \in P(G)^t$, i.e. $\bold{F}$ can be written as
$$\bold{F}=\sum_{i=1}^r \Big(\sum_{j=1}^s  a_{\alpha_{ij}} g^{\alpha_{ij}}\Big)\bold{A}_i^T\bold{A}_i, $$
with $\alpha_{ij}\in \mathbb N_0^k$,  $a_{\alpha_{ij}} \geq 0$, $g^{\alpha_{ij}}:=g_1^{(\alpha_{ij})_1}\ldots g_k^{(\alpha_{ij})_k}$ and $\bold{A}_i \in \mathcal{M}_t(\mathbb R[X])$. 
\end{theorem}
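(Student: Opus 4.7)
The plan is to apply the Scherer-Hol theorem to the matrix quadratic module $P(G)^t$. For this to work, two conditions must be verified: that $P(G)^t$ is Archimedean, and that $K(P(G)^t) = K(G)$. Once both are in place, Theorem \ref{Scherer-Hol} will immediately yield $\bold{F} \in P(G)^t$, and unwinding the definition will produce the sum-of-products form displayed in the statement.

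To establish Archimedeanness, I would exploit the fact that $K(\hat{G})$ is a nonempty compact polyhedron cut out by the linear polynomials $g_1,\ldots,g_m$. By Handelman's theorem (the scalar case \cite[Theorem 12.44]{Schm2} that the present statement generalises), every polynomial strictly positive on $K(\hat{G})$ lies in the semiring $P(\hat{G})$. Compactness of $K(\hat{G})$ supplies a constant $\lambda>0$ with $\lambda\pm X_i>0$ on $K(\hat{G})$ for all $i=1,\ldots,n$, so Handelman gives $\lambda\pm X_i\in P(\hat{G})\subseteq P(G)$. Lemma \ref{Archimed-semiring} then shows that the semiring $P(G)$ is Archimedean, and Lemma \ref{Archimed-Mn} simultaneously upgrades this to Archimedeanness of the matrix quadratic module $P(G)^t$ and gives $K(P(G)^t)=K(G)$.

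With both hypotheses of Scherer-Hol in hand, Theorem \ref{Scherer-Hol} produces $\bold{F}\in P(G)^t$, i.e.\ $\bold{F}=\sum_{i} m_i\bold{A}_i^T\bold{A}_i$ with $m_i\in P(G)$ and $\bold{A}_i\in\mathcal{M}_t(\mathbb R[X])$. Expanding each $m_i$ as a finite nonnegative combination of monomials $g^{\alpha_{ij}}=g_1^{(\alpha_{ij})_1}\cdots g_k^{(\alpha_{ij})_k}$ and regrouping delivers precisely the representation claimed. The only step with real content is the appeal to Handelman's theorem; beyond that, the matrix statement is a formal consequence of Scherer-Hol and Lemma \ref{Archimed-Mn}, so I do not expect a substantive obstacle—the whole point of singling out the linear polynomials in $\hat{G}$ is to be able to invoke Handelman to secure the Archimedean property without having to assume it.
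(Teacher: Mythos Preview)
Your proposal is correct and follows the same overall arc as the paper: show that $P(G)$, and hence $P(G)^t$, is Archimedean; identify $K\big(P(G)^t\big)=K(G)$ via Lemma~\ref{Archimed-Mn}; then invoke Scherer--Hol. The one point of divergence is how Archimedeanness is obtained. You appeal to the full scalar Handelman theorem to place $\lambda\pm X_i$ in $P(\hat G)$, whereas the paper exploits the fact that $\lambda\pm X_i$ is itself affine-linear and applies only the affine form of Farkas' lemma \cite[Lemma 12.43]{Schm3}: any affine functional nonnegative on a nonempty polyhedron $\{g_1\ge 0,\ldots,g_m\ge 0\}$ is already a nonnegative combination of $1,g_1,\ldots,g_m$. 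This is lighter---Farkas is the elementary linear-algebra input from which Handelman is typically deduced---and it avoids quoting as a black box the very scalar result the theorem is generalising. Apart from this one substitution, the two arguments coincide.
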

\begin{proof} Since $K(\hat{G})$ is  compact, there exists  $\lambda > 0$ such that for each $i=1,\ldots, n$, the linear polynomial $\lambda \pm X_i $ is non-negative on  $K(\hat{G})$. Since $K(\hat{G})$ is non-empty, it follows from an affine form of  \textit{Farkas' lemma} (cf. \cite[Lemma 12.43]{Schm3}) that  for each $i=1,\ldots,n$ we have 
$$ \lambda \pm X_i = \lambda_0 + \lambda_1f_1 + \ldots + \lambda_m f_m,$$
with  $\lambda_j \geq 0$, $j=1,\ldots,m$. Hence $ \lambda \pm X_i \in P(G)$ for all $i=1,\ldots, n$. By Lemma \ref{Archimed-semiring}, the semiring $P(G)$ is Archimedean.

Moreover, since $P(G)^t$ contains the set of sums of squares $\sum_t \mathbb R[X]^2$, it is a   quadratic module on $\mathcal{M}_t(\mathbb R[X])$. It follows from  Lemma \ref{Archimed-Mn} that $P(G)^t$ is also Archimedean and
$$ K\big(P(G)^t\big)= K(P(G))=K(G).$$
For each $x\in {K}\big(P(G)^t\big)$, we have $x\in K(G)$, hence 
$\bold{F}(x) > 0 $. It follows from the Scherer-Hol theorem that $\bold{F}\in P(G)^t$. The proof is complete.
\end{proof}

\section{A Putinar-Vasilescu Positivstellensatz for polynomial matrices}
The Putinar-Vasilescu Positivstellensatz for homogeneous polynomials is stated as follows.
\begin{theorem}[{\cite[Theorem 4.5]{PV}}] Let $f$ and $g_1,\ldots,g_m$ be homogeneous polynomials in $\mathbb R[X]:=\mathbb R[X_1,\ldots,X_n]$ of even degree. Denote  $G=\{g_1,\ldots,g_m\}$. If $f(x) > 0$ for all $x\in K(G)\setminus \{0\}$, then there exists a number $N>0$ such that 
$$ (\sum_{i=1}^n X_i^2)^N f \in M(G). $$
\end{theorem}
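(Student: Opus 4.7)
The plan is to pass to the unit sphere, apply Putinar's classical Positivstellensatz there, and then homogenize by means of a carefully chosen operator. Since $f$ and $g_1,\ldots,g_m$ are homogeneous of even degree and $K(G)$ is a cone, positivity of $f$ on $K(G)\setminus\{0\}$ is equivalent to positivity of $f$ on the compact set $K(G)\cap S^{n-1}$, where $S^{n-1}=\{q=1\}$ with $q:=X_1^2+\cdots+X_n^2$. The quadratic module $M':=M(G\cup\{1-q,q-1\})$ in $\mathbb{R}[X]$ contains $1-q$, hence it is Archimedean by Lemma \ref{Archimed-semiring}, and its zero set is exactly $K(G)\cap S^{n-1}$. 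Applying Putinar's Positivstellensatz yields an identity
\[
f=\sigma_0+\sum_{i=1}^{m}\sigma_i g_i+h(q-1),
\]
with $\sigma_0,\sigma_1,\ldots,\sigma_m\in\sum\mathbb{R}[X]^2$ and $h\in\mathbb{R}[X]$.

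The key ingredient is a homogenization operator. For $p=\sum_k p_k$ (homogeneous decomposition) and an integer $N$ such that $N-k$ is even for every $k$ with $p_k\neq 0$, set $\Phi_N(p):=\sum_k q^{(N-k)/2}p_k$, which is homogeneous of degree $N$. Averaging the identity above under the involution $X\mapsto -X$ (which fixes $f,g_i,q$ by their even degree and carries sums of squares to sums of squares) lets us assume that $\sigma_0,\sigma_i,h$ all have only even-degree components. The operator $\Phi$ is then multiplicative, $\Phi_{N_1+N_2}(p_1 p_2)=\Phi_{N_1}(p_1)\Phi_{N_2}(p_2)$ whenever all the relevant parities are compatible, and the key identity is
\[
\Phi_2(q-1)=q^0\cdot q+q^1\cdot(-1)=q-q=0.
\]
Consequently $\Phi_{2L}(h(q-1))=\Phi_{2L-2}(h)\cdot\Phi_2(q-1)=0$ for every sufficiently large $L$.

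Applying $\Phi_{2L}$ to both sides of the identity (for $2L$ larger than the degree of every term and of matching parity) produces, with $d_i:=\deg g_i$,
\[
q^{L-d/2}f=\Phi_{2L}(\sigma_0)+\sum_{i=1}^{m}\Phi_{2L-d_i}(\sigma_i)\,g_i.
\]
The remaining and main technical obstacle is to show that $\Phi_{2L}(\sigma_0)$ and each $\Phi_{2L-d_i}(\sigma_i)$ are still sums of squares. Once $\sigma$ has only even-degree components, one can write $\sigma=\sum_j a_j^2+\sum_j b_j^2$ with $a_j$ purely even and $b_j$ purely odd; then $\Phi_{2L}(a_j^2)=\Phi_L(a_j)^2$ when $L$ is even, and otherwise $\Phi_{2L}(a_j^2)=q\cdot\Phi_{L-1}(a_j)^2=\sum_{i=1}^n\bigl(X_i\Phi_{L-1}(a_j)\bigr)^2$; an analogous computation with the parity of $L$ reversed handles the $b_j^2$. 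In every case the output is a sum of squares, so we conclude that $q^{L-d/2}f\in M(G)$, giving the exponent $N:=L-d/2>0$ demanded by the theorem. The delicate point throughout is this parity bookkeeping; the heart of the argument is the identity $\Phi_2(q-1)=0$ combined with Putinar's theorem on the sphere.
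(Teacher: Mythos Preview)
Your argument is correct and follows the same overall strategy that the paper uses in its proof of the matrix analogue, Theorem~\ref{Putinar-Vasilescu} (the scalar statement itself is only quoted from \cite{PV} and not re-proved in the paper). Both proofs restrict to the sphere to obtain an Archimedean quadratic module, apply Putinar's theorem there, and then homogenize to eliminate the contribution from the ideal $\langle q-1\rangle$. Your operator $\Phi_N$ is exactly the paper's device of substituting $X\mapsto \lambda X/\sqrt{\sigma}$ and then multiplying through by $\sigma^{N/2}$: for $p=\sum_k p_k$ one has $\sigma^{N/2}p(\lambda X/\sqrt{\sigma})=\sum_k \lambda^k\sigma^{(N-k)/2}p_k$, which (up to the harmless constant $\lambda^k$) is $\Phi_N(p)$, and the vanishing $\Phi_2(q-1)=0$ corresponds to the paper's observation that $\lambda^2-\sum_j(\lambda X_j/\sqrt{\sigma})^2=0$.

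The one genuine difference is in how the $\sqrt{\sigma}$/parity issue is handled. The paper simply asserts that objects such as $\sigma^{e_1/2}\sigma_{ij}(\lambda X/\sqrt{\sigma})$ are sums of squares in $\mathbb{R}[X]$ and that $\sigma^{e_3/2}\bold A_i(\lambda X/\sqrt{\sigma})\in\mathcal{M}_t(\mathbb{R}[X])$, without discussing the odd-degree components that would produce half-integer powers of $\sigma$. You instead first average the Putinar identity under $X\mapsto -X$ to force $\sigma_0,\sigma_i,h$ to have only even-degree homogeneous parts, and then verify directly---via the decomposition of each SOS into squares of purely even and purely odd polynomials---that $\Phi$ sends sums of squares to sums of squares. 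This makes the homogenization step fully rigorous and is a worthwhile clarification of the paper's argument.
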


In this section we apply the Scherer-Hol  theorem to give a matrix version of this Positivstellensatz. 

\begin{theorem} \label{Putinar-Vasilescu} Let $\mathcal{G}\subseteq \mathcal{M}_t(\mathbb R[X])$ be a finite set of homogeneous polynomial matrices of even degrees.   Let $\bold{F}\in  \mathcal{S}_t(\mathbb R[X])$ be a homogeneous polynomial matrix of even degree $d>0$.  If $\bold{F}(x)>0$ for all   $x\in K(\mathcal{G})\setminus \{0\}$, then there exist a finite set $G$ of homogeneous polynomials   in $\mathbb R[X]$ of even degrees  and a number $N>0$ such that 
$$ (\sum_{i=1}^n X_i^2)^N \bold{F} \in M(G)^t \subseteq \mathcal{M}(\mathcal{G}). $$ 
\end{theorem}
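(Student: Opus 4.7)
The plan is a matrix adaptation of the Putinar--Vasilescu argument for homogeneous polynomials, with three ingredients: reduction to scalar constraints via Lemma~\ref{lm-cimpric}, restriction to a compact unit sphere to obtain an Archimedean quadratic module, and a homogeneity/dehomogenization argument to absorb the resulting $(1-\sum_i X_i^2)$ term into a power of $\sum_i X_i^2$.

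First I would apply Lemma~\ref{lm-cimpric} to produce a finite set $G\subseteq\mathbb{R}[X]$ of homogeneous polynomials with $K(G)=K(\mathcal{G})$ and $M(G)^{t}\subseteq\mathcal{M}(\mathcal{G})$; since $\mathcal{G}$ consists of even-degree matrices, the principal-minor-type scalars produced by Cimpri\v{c}'s construction are themselves of even degree. Then introduce the quadratic module
$$\mathcal{Q}:=M\bigl(G\cup\{1-\textstyle\sum_i X_i^2,\ \sum_i X_i^2-1\}\bigr)^{t}\subseteq\mathcal{S}_t(\mathbb{R}[X]),$$
which is Archimedean by Lemma~\ref{Archimed-Mn} and whose zero set $K(\mathcal{Q})=K(G)\cap\{\sum_i X_i^2=1\}$ is compact and contained in $K(G)\setminus\{0\}$. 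By homogeneity, $\mathbf{F}(x)>0$ on $K(G)\setminus\{0\}$ forces $\mathbf{F}>0$ on $K(\mathcal{Q})$, so Theorem~\ref{Scherer-Hol} yields a representation
$$\mathbf{F}=\Sigma_{0}+\sum_{i=1}^{m}g_{i}\Sigma_{i}+\bigl(1-\textstyle\sum_i X_i^2\bigr)\Sigma_{+}+\bigl(\sum_i X_i^2-1\bigr)\Sigma_{-}$$
with all $\Sigma_\bullet\in\sum_t\mathbb{R}[X]^{2}$.

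The main task is to convert this into an identity $(\sum_i X_i^2)^{N}\mathbf{F}=\widetilde\Sigma_{0}+\sum_i g_i\widetilde\Sigma_{i}$ with each $\widetilde\Sigma_{j}\in\sum_t\mathbb{R}[X]^{2}$, which places $(\sum_i X_i^2)^N\mathbf{F}$ in $M(G)^t$. I would proceed in two substeps. First, pass to the $X\mapsto -X$-invariant part of the Scherer--Hol identity: since $\mathbf{F}$, the $g_i$, and $\sum_i X_i^2$ are all even-degree, one may replace each $\Sigma_\bullet$ by $\Sigma_\bullet^{\mathrm{ev}}(X):=(\Sigma_\bullet(X)+\Sigma_\bullet(-X))/2$, and the even part of an SOS matrix remains SOS via the splitting $A^{T}A=(A^{\mathrm{e}})^{T}A^{\mathrm{e}}+(A^{\mathrm{o}})^{T}A^{\mathrm{o}}+(\text{odd-degree cross terms})$, where $A=A^{\mathrm{e}}+A^{\mathrm{o}}$ is the decomposition into even- and odd-degree components. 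Second, perform the Putinar--Vasilescu dehomogenization: substituting $X\mapsto tX$ in the reduced identity (in which every $\Sigma_\bullet^{\mathrm{ev}}(tX)$ is now a polynomial in $s=t^{2}$), the formal substitution $s=1/\sum_i X_i^2$ kills the $(1-s\sum_i X_i^2)$ summands; multiplying by a sufficiently high power of $\sum_i X_i^2$ reconverts everything into a polynomial identity. To recognise each $\widetilde\Sigma_{j}$ as SOS, one twists each summand $A^{T}A$ appearing in $\Sigma_\bullet^{\mathrm{ev}}$ individually: for $A$ with only even-degree entries set $\widetilde A:=\sum_a(\sum_i X_i^2)^{(M-a)/2}A^{(a)}$, and for $A$ with only odd-degree entries set $\widetilde A:=\sum_a(\sum_i X_i^2)^{(M-a-1)/2}A^{(a)}$, for a suitable even integer $M$; then $\widetilde A^{T}\widetilde A$, resp.\ $(\sum_i X_i^2)\widetilde A^{T}\widetilde A=\sum_i(X_i\widetilde A)^{T}(X_i\widetilde A)$, is a genuine sum of squares reproducing exactly the weighted sum of homogeneous components of $A^{T}A$ demanded by the dehomogenized identity.

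The main obstacle is the final bookkeeping: ensuring that all twist exponents $(M-a)/2$ and $(M-a-1)/2$ are simultaneously non-negative integers across the SOS summands of $\Sigma_0$ and of every $\Sigma_i$. This requires coherent parity control, which one achieves by first fixing $N$ large of the appropriate parity and, if necessary, pre-multiplying any offending $g_i$ by $\sum_i X_i^2$ (harmless since $(\sum_i X_i^2)g_i\in M(G)$ and it leaves $K(G)$ unchanged) so that $N+(d-d_i)/2$ is even for every $i$. This parity bookkeeping is exactly where the even-degree hypothesis on $d$ and on $\mathcal{G}$ is used.
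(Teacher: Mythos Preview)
Your proposal is correct and follows essentially the same three-step strategy as the paper: apply Lemma~\ref{lm-cimpric} to pass to scalar constraints, restrict to a sphere so that the resulting quadratic module is Archimedean and Scherer--Hol applies, then dehomogenize by a substitution that annihilates the sphere constraint and clear the $\sigma$-denominators. The only difference is in the dehomogenization step: the paper substitutes $X\mapsto\lambda X/\sqrt{\sigma}$ directly and multiplies through by $\sigma^{N}$, whereas you first pass to the $X\mapsto -X$-invariant part of the Scherer--Hol representation before substituting $t^{2}=1/\sigma$; your parity-splitting makes the verification that each weighted piece is a genuine polynomial sum of squares more explicit, but the underlying mechanism is the same.
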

\begin{proof} It follows from Lemma \ref{lm-cimpric} that there exists a finite subset $ G=\{g_1,\ldots,g_m\}$ of $\mathbb R[X]$ consisting  of homogeneous polynomials  of even degrees $d_1,\ldots, d_m$, respectively, such that 
$$ K(G) = K(\mathcal{G}) \mbox{ and } M(G)^t \subseteq \mathcal{M}(\mathcal{G}).$$
 Let $\lambda >0$ such that $K(G)\cap \mathbb  S(0;\lambda^2) \not = \emptyset$, where $\mathbb S:=\mathbb S(0;\lambda^2)$ denotes the sphere 
$$ \{x\in \mathbb R^n : \lambda^2 - \sum_{i=1}^n x_i^2 =0\}. $$
Denote 
$$G'=G\cup \{\lambda^2 - \sum_{i=1}^n X_i^2, \sum_{i=1}^n X_i^2 - \lambda^2\}. $$
Then $K(G')=K(G)\cap \mathbb S$, and $M(G')=M(G)+\left<\lambda^2-\sum_{i=1}^n X_i^2\right>$, where $\left<\lambda^2-\sum_{i=1}^n X_i^2\right>$ denotes the ideal in $\mathbb R[X]$ generated by the polynomial $\lambda^2-\sum_{i=1}^n X_i^2$. 

Since $\lambda^2-\sum_{i=1}^n X_i^2 \in M(G')$, it follows from Lemma \ref{Archimed-semiring} that $M(G')$ is an Archimedean quadratic module. Then it follows from Lemma \ref{Archimed-Mn} that the quadratic module $M(G')^t$ is also Archimedean on $\mathcal{M}_t(\mathbb R[X])$.  By Lemma \ref{Archimed-Mn},
$$ K\big(M(G')^t\big)=K(M(G')) = K(G')=K(G)\cap \mathbb S.$$
For any $x \in  K\big(M(G')^t\big)=K(G')$, we have $x\in K(G)\cap \mathbb S$, hence $x\in K(G)\setminus \{0\}$. Then $ \bold{F}(x) >0. $ It follows from the Scherer-Hol theorem that $\bold{F} \in  M(G')^t$, i.e.  $\bold{F}$ can be expressed as 
\begin{align}
\bold{F}(X)&=\sum_{i=1}^l \big(\sigma_{i0}(X) + \sigma_{i1}(X)g_1(X) + \ldots + \sigma_{im}(X)g_m(X)\big)\bold{A}^T_i(X)\bold{A}_i(X)+ \nonumber \\
& + \sum_{i=1}^l h_i(X)(\lambda^2 - \sum_{j=1}^nX_j^2)\bold{A}^T_i(X)\bold{A}_i(X), \label{equ1}
\end{align}
where $\sigma_{ij}\in \sum \mathbb R[X]^2$, $h_i\in \mathbb R[X]$, $\bold{A}_i \in \mathcal{M}_t(\mathbb R[X])$. 

Substituting each $X_i$ by $\dfrac{\lambda X_i}{\sqrt{\sigma}}$ in both sides of (\ref{equ1}), where $\sigma: = \sum_{j=1}^n X_j^2$, observing that $$\lambda^2 - \sum_{j=1}^n\big(\dfrac{\lambda X_i}{\sqrt{\sigma}}\big)^2 =0,  $$
$$ \bold{F}\big(\dfrac{\lambda X}{\sqrt{\sigma}}\big) = \dfrac{\lambda^d}{\sigma^{d/2}}\bold{F}(X), \mbox{ and } g_j\big(\dfrac{\lambda X}{\sqrt{\sigma}}\big) = \dfrac{\lambda^{d_j}}{\sigma^{d_j/2}}g_j(X),$$
we have 
\begin{equation} \label{equ4.1}
\dfrac{\lambda^d}{\sigma^{d/2}}\bold{F}(X)=\sum_{i=1}^l \big(\sigma_{i0}\big(\dfrac{\lambda X}{\sqrt{\sigma}}\big) + \sum_{j=1}^m\dfrac{\lambda^{d_j}}{\sigma^{d_j/2}}\sigma_{ij}\big(\dfrac{\lambda X}{\sqrt{\sigma}}\big)g_j(X)\big)\bold{A}^T_i\big(\dfrac{\lambda X}{\sqrt{\sigma}}\big)\bold{A}_i\big(\dfrac{\lambda X_i}{\sqrt{\sigma}}\big).
\end{equation}
Denote 
\begin{align*}
e_1&:= \max\{\deg(\sigma_{ij}), j=0,\ldots,m\},\\
e_2&:= \max\{d_j, j=1,\ldots,m\},\\
e_3&:= \max\{\deg({\bold{A}_i}), i=1,\ldots, l\},
\end{align*}
which are even numbers. Put $N:=d/2+e_1/2+e_2/2+e_3$, and multiplying both sides of (\ref{equ4.1}) for $\sigma^N$, we have
\begin{align*} \label{equ4.2}
\lambda^d \sigma^{N-d/2}\bold{F}(X)&=\sigma^{d/2}\sum_{i=1}^l \Bigg(\sigma^{e_1/2+e_2/2}\sigma_{i0}\big(\dfrac{\lambda X}{\sqrt{\sigma}}\big) + \\
& + \sum_{j=1}^m\lambda^{d_j}(\sigma^{e_1/2}\sigma_{ij}(\dfrac{\lambda X}{\sqrt{\sigma}}))\sigma^{e_2/2-d_j/2}g_j(X)\Bigg)\sigma^{e_3}\bold{A}^T_i\big(\dfrac{\lambda X}{\sqrt{\sigma}}\big)\bold{A}_i\big(\dfrac{\lambda X_i}{\sqrt{\sigma}}\big).
\end{align*}
Note that 
$$ \sigma'_{i0}:= \sigma^{e_1/2+e_2/2}\sigma_{i0}\big(\dfrac{\lambda X}{\sqrt{\sigma}}\big) \mbox{ and } \sigma'_{ij}:= \lambda^{d_j}(\sigma^{e_1/2}\sigma_{ij}(\dfrac{\lambda X}{\sqrt{\sigma}}))\sigma^{e_2/2-d_j/2}$$
are sums of squares in $\mathbb R[X]$;
$$\bold{B}_i:=\sigma^{e_3/2}\bold{A}_i\big(\dfrac{\lambda X}{\sqrt{\sigma}}\big) \in  \mathcal{M}_t(\mathbb R[X]). $$
Then 
$$ \sigma^{N-d/2}\bold{F} = \sum_{i=1}^l \Big(\theta_{i0} + \sum_{j=1}^m \theta_{ij}g_j\Big)\bold{B}_i^T \bold{B}_i,  $$
where $\theta_{ij}:=\lambda^{-d}\sigma^{d/2}\sigma'_{ij} \in \sum \mathbb R[X]^2$. It follows that 
$$ \sigma^{N-d/2}\bold{F}\in M(G)^t\subseteq \mathcal{M}(\mathcal{G}).$$
\end{proof}

In the case $\mathcal{G}=\emptyset$, we have the following matrix version of \textit{Reznick's Positivstellensatz}.
\begin{corollary}
Let $\bold{F}\in  \mathcal{S}_t(\mathbb R[X])$ be a homogeneous polynomial matrix.  If $\bold{F}(x)>0$ for all   $x\in \mathbb R^n\setminus \{0\}$, then there exists a number $N>0$ such that $ (\sum_{i=1}^n X_i^2)^N \bold{F} \in \sum_t \mathbb R[X]^2. $
\end{corollary}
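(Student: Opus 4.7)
The plan is to deduce this corollary as the specialization of Theorem \ref{Putinar-Vasilescu} to the empty constraint set $\mathcal{G}=\emptyset$. The main task is to confirm that the hypotheses of the theorem are met in this degenerate case and that its conclusion collapses to the claimed form.

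First I would check the even-degree requirement implicit in the theorem. If $d$ denotes the degree of $\bold{F}$, homogeneity gives $\bold{F}(-x)=(-1)^d\bold{F}(x)$, so $d$ odd would contradict positivity of $\bold{F}$ on $\mathbb R^n\setminus\{0\}$ at any point $-x$. Hence $d$ is even. The trivial case $d=0$ should be dispatched separately: a constant positive definite matrix admits a Cholesky factorization $\bold{F}=\bold{L}^T\bold{L}\in\sum_t\mathbb R[X]^2$, and multiplication by any positive power of $\sigma:=\sum_i X_i^2$ preserves membership in $\sum_t\mathbb R[X]^2$, since $\sigma\,\bold{L}^T\bold{L}=\sum_i(X_i\bold{L})^T(X_i\bold{L})$.

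For $d>0$, I would apply Theorem \ref{Putinar-Vasilescu} with $\mathcal{G}=\emptyset$. Vacuously, $\mathcal{G}$ is a finite set of homogeneous polynomial matrices of even degrees. The associated semi-algebraic set is $K(\mathcal{G})=K(\emptyset)=\mathbb R^n$, so the positivity hypothesis of the theorem reduces to $\bold{F}(x)>0$ on $\mathbb R^n\setminus\{0\}$, which is exactly our assumption. As noted in the preliminaries, $\mathcal{M}(\emptyset)=\sum_t\mathbb R[X]^2$. Invoking the theorem then furnishes an integer $N>0$ with $(\sum_{i=1}^n X_i^2)^N\bold{F}\in\mathcal{M}(\emptyset)=\sum_t\mathbb R[X]^2$, which is precisely the desired conclusion.

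Because the statement is an immediate specialization of the preceding theorem, there is no genuine obstacle to overcome; the only points requiring attention are the automatic evenness of the degree and the cosmetic handling of the constant case $d=0$.
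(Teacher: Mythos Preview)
Your proposal is correct and follows exactly the paper's intended route: the corollary is stated immediately after Theorem \ref{Putinar-Vasilescu} as the special case $\mathcal{G}=\emptyset$, with no separate proof given. Your additional remarks justifying that the degree is automatically even and treating the constant case $d=0$ are appropriate, since the corollary as stated drops the hypothesis ``even degree $d>0$'' present in the theorem.
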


To give a non-homogeneous version of Theorem \ref{Putinar-Vasilescu}, we need the following notions. For a polynomial 
$$g(X)=\sum_{|\alpha|\leq e} g_\alpha X^\alpha \in \mathbb R[X_1,\ldots,X_n]$$ of  degree $e$, its \textit{homogenization} in the ring  $\mathbb R[X_0,X_1,\ldots,X_n]$ is defined by 
$$ \tilde{g}(X_0,X_1,\ldots,X_n):=\sum_{|\alpha|\leq e} g_\alpha X^\alpha X_0^{e-|\alpha|}.$$
 It is clear that $\tilde{g}$ is homogeneous of degree $e$ and $\tilde{g}(1,x_1,\ldots,x_n)=g(x_1,\ldots,x_n)$ for all $(x_1,\ldots,x_n)\in \mathbb R^n$..
 
 For a polynomial matrix $\bold{G}\in  \mathcal{M}_t(\mathbb R[X_1,\ldots,X_n])$ of degree $d$, we can write 
 $$ \bold{G}(X)=\sum_{|\alpha|\leq d}\bold{G}_\alpha X^\alpha, $$
 with $\bold{G}_\alpha \in \mathcal{M}_t(\mathbb R)$. Its homogenization in the algebra $\mathcal{M}_t(\mathbb R[X_0,X_1,\ldots,X_n])$ is defined by
 $$ \widetilde{\bold{G}}(X_0,\ldots,X_n)=\sum_{|\alpha|\leq d}\bold{G}_\alpha X^\alpha X_0^{d-|\alpha|}. $$
 It is obvious that $\widetilde{\bold{G}}$ is homogeneous of degree $d$ and $ \widetilde{\bold{G}}(1,x_1,\ldots,x_n) =   \bold{G} (x_1,\ldots,x_n)$ for all $(x_1,\ldots,x_n)\in \mathbb R^n$.

\begin{corollary} \label{Putinar-Vasilescu-nonhomo} Let $\mathcal{G}\subseteq \mathcal{M}_t(\mathbb R[X])$ be a finite set of polynomial matrices of even degrees.   Let $\bold{F}\in  \mathcal{S}_t(\mathbb R[X])$ be a  polynomial matrix of even degree.  Denote $\widetilde{\mathcal{G}}:=\{\widetilde{\bold{G}}| \bold{G} \in \mathcal{G}\}\subseteq \mathcal{M}_t(\mathbb R[X_0,X_1,\ldots,X_n]).$
If $\widetilde{\bold{F}}(x)>0$ for all   $x\in K(\widetilde{\mathcal{G}})\setminus \{0\}$, then there exist a finite set $G$ of  polynomials  in $\mathbb R[X]$ of even degrees   and a number $N>0$ such that 
$$ (1+\sum_{i=1}^n X_i^2)^N \bold{F} \in M(G)^t \subseteq \mathcal{M}(\mathcal{G}). $$ 
\end{corollary}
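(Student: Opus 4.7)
The plan is to reduce the non-homogeneous Corollary \ref{Putinar-Vasilescu-nonhomo} to its homogeneous counterpart, Theorem \ref{Putinar-Vasilescu}, via the standard homogenization/dehomogenization trick in one auxiliary variable $X_0$.

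First, I would work in the enlarged ring $\mathbb R[X_0,X_1,\ldots,X_n]$. By construction, $\widetilde{\bold{F}}$ is homogeneous symmetric of even degree and every matrix in $\widetilde{\mathcal{G}}$ is homogeneous of even degree, while the hypothesis $\widetilde{\bold{F}}(x)>0$ on $K(\widetilde{\mathcal{G}})\setminus\{0\}$ is exactly the input required by Theorem \ref{Putinar-Vasilescu} in $n+1$ variables. Applying that theorem produces a finite set $\widetilde{G}=\{\tilde g_1,\ldots,\tilde g_m\}$ of homogeneous polynomials of even degree in $\mathbb R[X_0,\ldots,X_n]$ and an integer $N>0$ together with an explicit representation
$$\Bigl(X_0^2+\sum_{i=1}^n X_i^2\Bigr)^N\widetilde{\bold{F}}=\sum_i\Bigl(\tilde\sigma_{i0}+\sum_{j=1}^m\tilde\sigma_{ij}\tilde g_j\Bigr)\widetilde{\bold{B}}_i^T\widetilde{\bold{B}}_i\in M(\widetilde{G})^t\subseteq\mathcal{M}(\widetilde{\mathcal{G}}),$$
with $\tilde\sigma_{ij}\in\sum\mathbb R[X_0,X]^2$ and $\widetilde{\bold{B}}_i\in\mathcal{M}_t(\mathbb R[X_0,X])$.

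Next I would apply the evaluation homomorphism $\varphi\colon\mathbb R[X_0,X]\to\mathbb R[X]$ sending $X_0\mapsto 1$, extended entrywise to matrix algebras. It preserves sums, products, transposes, and squares, and carries $\widetilde{\bold{F}}$ to $\bold{F}$ and each $\widetilde{\bold{G}}\in\widetilde{\mathcal{G}}$ back to $\bold{G}\in\mathcal{G}$. Setting $g_j:=\varphi(\tilde g_j)$, $\sigma_{ij}:=\varphi(\tilde\sigma_{ij})\in\sum\mathbb R[X]^2$, $\bold{B}_i:=\varphi(\widetilde{\bold{B}}_i)$, and $G:=\{g_1,\ldots,g_m\}$, the identity above specializes to
$$\Bigl(1+\sum_{i=1}^n X_i^2\Bigr)^N\bold{F}=\sum_i\Bigl(\sigma_{i0}+\sum_{j=1}^m\sigma_{ij}g_j\Bigr)\bold{B}_i^T\bold{B}_i,$$
which witnesses the inclusion $\bigl(1+\sum X_i^2\bigr)^N\bold{F}\in M(G)^t\subseteq\mathcal{M}(\mathcal{G})$ claimed by the corollary.

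The one delicate point I foresee is the even-degree requirement on $G$: setting $X_0=1$ in a homogeneous polynomial of even degree may lower the total degree by an odd amount, so some of the $g_j$ could end up of odd degree. I would handle this by replacing any such $g_j$ with $g_j\cdot(1+\sum_{i=1}^n X_i^2)$, which is of even degree and does not alter the semi-algebraic set $K(G)$, and compensating in the representation by multiplying both sides by an appropriate power of $1+\sum X_i^2$ (absorbed into a slightly larger $N$), so that each summand still lies in the quadratic module generated by the modified $G$. Apart from this routine bookkeeping, the argument is a direct transfer of Theorem \ref{Putinar-Vasilescu} through the homogenization correspondence.
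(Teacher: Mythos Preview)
Your proposal is correct and follows exactly the paper's own argument: apply Theorem~\ref{Putinar-Vasilescu} to $\widetilde{\bold F}$ and $\widetilde{\mathcal G}$ in $n+1$ variables, then specialize $X_0=1$ via the evaluation homomorphism to obtain the desired representation and the inclusion $M(G)^t\subseteq\mathcal M(\mathcal G)$. Your observation about the parity of $\deg g_j$ after dehomogenization, and the fix via multiplication by $1+\sum X_i^2$, is in fact more careful than the paper's proof, which passes over this point silently.
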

\begin{proof} It follows from Theorem \ref{Putinar-Vasilescu} that  there exist a finite set $\widetilde{G}$ of homogeneous polynomials  of even degrees  in $\mathbb R[X_0,X_1,\ldots,X_n]$ and a number $N>0$ such that 
\begin{equation}\label{equ-putinar-vasilescu-homo}
(\sum_{i=0}^n X_i^2)^N \widetilde{\bold{F}} \in M(\widetilde{G})^t \subseteq \mathcal{M}(\widetilde{\mathcal{G}}). 
\end{equation} 
Denote $G=\{g(1,X_1,\ldots,X_n) | g \in \widetilde{G}\}$. Since $M(\widetilde{G})^t \subseteq \mathcal{M}(\widetilde{\mathcal{G}})$, we have $M(G)^t \subseteq \mathcal{M}(\mathcal{G})$. Substituting $X_0=1$ in both sides of (\ref{equ-putinar-vasilescu-homo}) we obtain
$$(1+\sum_{i=1}^n X_i^2)^N  \bold{F}  \in M(G)^t \subseteq \mathcal{M}(\mathcal{G}).  $$
\end{proof}

\section{A P\'olya-Putinar-Vasilescu Positivstellensatz for polynomial matrices }
Dickinson and Povh (2015, \cite[Theorem 3.5]{DP}) proved the following Positivstellensatz, which is so-called the \textit{P\'olya-Putinar-Vasilescu Positivstellensatz} for homogeneous polynomials, stated as follows.

\begin{theorem} Let $f$ and $g_1,\ldots,g_m$ be homogeneous polynomials in $\mathbb R[X]$ of even degree.  Denote $G=\{g_1,\ldots,g_m\}$. If $f(x) > 0$ for all $x\in \mathbb R_+^n\cap K(G)\setminus \{0\}$, then there exists a number $N>0$ and homogeneous polynomials $h_i, i=1,\ldots, m$ with nonnegative coefficients  such that 
$$ (\sum_{i=1}^n X_i)^N f = \sum_{i=1}^m h_ig_i. $$
\end{theorem}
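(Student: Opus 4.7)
The plan is to derive this scalar P\'olya--Putinar--Vasilescu Positivstellensatz by combining the Putinar--Vasilescu theorem with P\'olya's classical theorem via the squaring substitution $Y_i=X_i^2$. First, I would pass to the auxiliary polynomials $\tilde{f}(X):=f(X_1^2,\ldots,X_n^2)$ and $\tilde{g}_j(X):=g_j(X_1^2,\ldots,X_n^2)$; these remain homogeneous of even degrees, and because the coordinate-wise squaring map surjects $\mathbb R^n\setminus\{0\}$ onto $\mathbb R_+^n\setminus\{0\}$, the hypothesis $f>0$ on $\mathbb R_+^n\cap K(G)\setminus\{0\}$ transports to $\tilde f>0$ on $K(\tilde g_1,\ldots,\tilde g_m)\setminus\{0\}\subseteq\mathbb R^n$. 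The Putinar--Vasilescu theorem would then produce an integer $N>0$ and SOS polynomials $\sigma_0,\ldots,\sigma_m\in\mathbb R[X]$ with
\[
\Big(\sum_{i=1}^n X_i^2\Big)^N\tilde f(X)=\sigma_0(X)+\sum_{j=1}^m\sigma_j(X)\,\tilde g_j(X).
\]

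Next, I would descend from $X$ to $Y=X^2$ by averaging both sides under the sign action $\varepsilon\in\{\pm1\}^n$ acting coordinate-wise. The left-hand side is invariant, and each averaged $\bar\sigma_j:=2^{-n}\sum_{\varepsilon}\sigma_j(\varepsilon_1 X_1,\ldots,\varepsilon_n X_n)$ remains an SOS (substitution and positive linear combinations preserve $\sum\mathbb R[X]^2$) and is even in every variable. Decomposing each SOS summand of $\bar\sigma_j$ by the parity pattern of its monomials yields $\bar\sigma_j(X)=\sum_{\rho\in\{0,1\}^n}X^{2\rho}\,\Sigma_\rho^{(j)}(X_1^2,\ldots,X_n^2)$ with each $\Sigma_\rho^{(j)}\in\sum\mathbb R[Y]^2$. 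Setting $Y_i:=X_i^2$, the identity descends to $\mathbb R[Y]$:
\[
\Big(\sum_{i=1}^n Y_i\Big)^N f(Y)=\sum_{\rho\in\{0,1\}^n}Y^\rho\,\Sigma_\rho^{(0)}(Y)+\sum_{j=1}^m g_j(Y)\sum_{\rho\in\{0,1\}^n}Y^\rho\,\Sigma_\rho^{(j)}(Y).
\]

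Finally, I would invoke P\'olya's theorem on each SOS $\Sigma_\rho^{(j)}$ in $\mathbb R[Y]$ to make $(\sum_i Y_i)^M\Sigma_\rho^{(j)}$ a nonnegative-coefficient polynomial for sufficiently large $M$, and then set $h_j:=\sum_\rho Y^\rho(\sum_i Y_i)^M\Sigma_\rho^{(j)}$ to obtain $(\sum_i X_i)^{N+M} f=h_0+\sum_{j=1}^m h_j g_j$ with every $h_j$ having nonnegative coefficients (the $h_0$ piece being implicit in the theorem's displayed equality, or equivalently absorbed via the convention $g_0:=1$).

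The hard part will be this last P\'olya step, since P\'olya's theorem requires strict positivity on $\mathbb R_+^n\setminus\{0\}$ whereas an arbitrary SOS need only be nonnegative and may vanish on a positive-dimensional subset (for instance $(Y_1-Y_2)^2$, for which no power of $Y_1+Y_2$ produces nonnegative coefficients). I would circumvent this obstacle by exploiting the strict positivity of $f$ on the compact set $\Delta\cap K(G)$, where $\Delta:=\{y\in\mathbb R_+^n:\sum y_i=1\}$: decompose $f=(f-\epsilon(\sum X_i)^d)+\epsilon(\sum X_i)^d$ for sufficiently small $\epsilon>0$ so that $f-\epsilon(\sum X_i)^d$ remains strictly positive on $\mathbb R_+^n\cap K(G)\setminus\{0\}$ (by compactness and continuity), apply the preceding procedure to this perturbed polynomial, and use the strictly-positive-coefficient polynomial $\epsilon(\sum X_i)^{N+d}$ contributed by the second summand as a buffer that can be redistributed among the various $\Sigma_\rho^{(j)}$ to force each of them to be strictly positive on $\Delta$, at which point P\'olya's theorem applies unconditionally and the proof concludes.
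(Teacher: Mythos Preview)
This theorem is quoted in the paper from Dickinson--Povh and is not proved here; what the paper does prove is the matrix analogue (Theorem~\ref{Polya-Putinar-Vasilescu}), and its method is quite different from yours. The paper restricts to the affine slice $\{\sum_k X_k=\lambda\}\cap\mathbb R_+^n\cap K(G)$, shows that the semiring $P(G')$ generated by $G\cup\{X_1,\dots,X_n,\ \lambda-\sum_k X_k,\ \sum_k X_k-\lambda\}$ is Archimedean (Lemmas~\ref{lm5.2}--\ref{lm5.3}), applies the scalar case of Scherer--Hol to obtain $f\in P_0P(G)+\langle\lambda-\sum_k X_k\rangle$, and then substitutes $X_i\mapsto \lambda X_i/\sum_k X_k$ and clears denominators. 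Both the homogeneity and the nonnegative-coefficient property of the weights fall out of this substitution automatically, so no separate invocation of P\'olya's theorem is ever needed.

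Your squaring-then-Putinar--Vasilescu-then-P\'olya strategy is genuinely different and can be pushed through, but two points need tightening before it works. First, Putinar--Vasilescu as usually stated does not guarantee that the SOS weights $\sigma_j$ are \emph{homogeneous}, and extracting a fixed homogeneous component of an SOS does not preserve the SOS property; without homogeneous $H_j$ you cannot invoke P\'olya at all. You should appeal to the homogeneous-multiplier form of Putinar--Vasilescu (obtainable by exactly the substitution trick used in the proof of Theorem~\ref{Putinar-Vasilescu}), so that after descent each $H_j=\sum_\rho Y^\rho\Sigma_\rho^{(j)}$ is homogeneous and nonnegative on $\mathbb R_+^n$. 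Second, your ``redistribution'' of the buffer is not right as phrased: the buffer $\epsilon(\sum_i Y_i)^{N+d}$ sits in the $g_0=1$ slot, and you cannot simply hand pieces of it to $\Sigma_\rho^{(j)}$ for $j\ge1$ without compensation. The workable move is to replace each $H_j$ ($j\ge1$) by $H_j+\eta(\sum_i Y_i)^{\deg H_j}$, which is now strictly positive on the simplex $\Delta$, and simultaneously replace $H_0$ by $H_0+\epsilon(\sum_i Y_i)^{N+d}-\eta\sum_{j\ge1}(\sum_i Y_i)^{\deg H_j}g_j$; since the $g_j$ are bounded on the compact $\Delta$, for $\eta$ small enough the modified $H_0$ is also strictly positive there, and P\'olya then applies to every coefficient. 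The paper's semiring-plus-substitution method sidesteps both of these issues in one stroke, which is its main advantage over your route.
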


In this section we apply the Scherer-Hol theorem  to establish a version of this Positivstellensatz for homogeneous polynomial matrices.
\begin{theorem} \label{Polya-Putinar-Vasilescu} Let $\mathcal{G}\subseteq \mathcal{M}_t(\mathbb R[X])$ be a finite set of homogeneous polynomial matrices of even degrees.   Let $\bold{F}\in  \mathcal{S}_t(\mathbb R[X])$ be a homogeneous polynomial matrix of even degree $d>0$.  If $\bold{F}(x)>0$ for all   $x\in \mathbb R_{+}^n \cap K(\mathcal{G})\setminus \{0\}$, then there exist a   set $G=\{g_1,\ldots,g_m\}\subseteq \mathbb R[X]$ consisting of  homogeneous polynomials  of even degrees, a number $N>0$, homogeneous polynomials $h_{\alpha_{ij}}$ with nonnegative coefficients,   and polynomial matrices $\bold{A}_i \in  \mathcal{M}_t(\mathbb R[X])$, for $ i=1,\ldots, l; j=1,\ldots, r$, such that    
$$ (\sum_{i=1}^n X_i)^N \bold{F} = \sum_{i=1}^l \Big(\sum_{j=1}^r h_{\alpha_{ij}}g^{\alpha_{ij}}\Big)\bold{A}_i^T\bold{A}_i, $$ 
where $\alpha_{ij}\in \mathbb N_0^m $, $g^{\alpha_{ij}}:=g_1^{(\alpha_{ij})_1}\ldots g_m^{(\alpha_{ij})_m}$.
\end{theorem}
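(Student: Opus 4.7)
The plan is to adapt the substitution argument of Theorem \ref{Putinar-Vasilescu}, but with the cutting affine condition $\sigma := X_1 + \cdots + X_n = 1$ in place of the sphere $\sum X_i^2 = \lambda^2$. The crucial point is that $\sigma$ has nonnegative coefficients, so a rescaling $X \mapsto X/\sigma$ followed by clearing denominators with a suitable power of $\sigma$ preserves the nonnegative-coefficient structure that the target statement requires of the $h_{\alpha_{ij}}$.

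First, Lemma \ref{lm-cimpric} reduces the matrix constraint set $\mathcal{G}$ to a finite scalar set $G = \{g_1, \ldots, g_m\} \subseteq \mathbb{R}[X]$ of homogeneous polynomials of even degrees $d_1, \ldots, d_m$ with $K(G) = K(\mathcal{G})$. Put
$$G' := G \cup \{X_1, \ldots, X_n,\, 1 - \sigma,\, \sigma - 1\}.$$
The linear subset of $G'$ cuts out the standard simplex, which is compact and nonempty, and
$$K(G') = \mathbb{R}_+^n \cap K(G) \cap \{\sigma = 1\} \subseteq \mathbb{R}_+^n \cap K(\mathcal{G}) \setminus \{0\},$$
so $\bold{F} > 0$ on $K(G')$ by hypothesis. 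Theorem \ref{compact-polyhedra} therefore yields a representation
$$\bold{F} = \sum_{i=1}^l p_i \bold{A}_i^T \bold{A}_i, \qquad p_i \in P(G').$$
Since $(\sigma - 1)^c = (-1)^c (1 - \sigma)^c$, every term of each $p_i$ either lies in $P(G \cup \{X_1, \ldots, X_n\})$ or is divisible by $(1 - \sigma)$; hence, after absorbing the second type into a matrix remainder, we may rewrite
$$\bold{F} = \sum_{i=1}^l q_i \bold{A}_i^T \bold{A}_i + (1 - \sigma) \bold{R},$$
with $q_i = \sum_j h_{\alpha_{ij}}(X)\, g^{\alpha_{ij}}$, each $h_{\alpha_{ij}} \in \mathbb{R}[X]$ having nonnegative coefficients (although not yet homogeneous), and $\bold{R} \in \mathcal{M}_t(\mathbb{R}[X])$.

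The final step is a rehomogenization mirroring equations (\ref{equ1})--(\ref{equ4.1}). Substituting $X \mapsto X/\sigma$ kills the $(1 - \sigma)$ term, and homogeneity gives $\bold{F}(X/\sigma) = \sigma^{-d}\bold{F}(X)$ together with $g^{\alpha_{ij}}(X/\sigma) = \sigma^{-|\alpha_{ij}|_d}g^{\alpha_{ij}}(X)$, where $|\alpha_{ij}|_d := \sum_k (\alpha_{ij})_k d_k$. Multiplying through by $\sigma^{N+d}$ for $N$ large enough to clear every denominator then produces the desired identity. The main technical point — and the only place one must argue carefully — is that the operation $h \mapsto \sigma^{\deg h}h(X/\sigma)$ sends a polynomial with nonnegative coefficients to a homogeneous polynomial with nonnegative coefficients (each monomial $X^\beta$ of $h$ is multiplied by $\sigma^{\deg h - |\beta|}$, and $\sigma$ itself has nonnegative coefficients), while $\bold{B}_i := \sigma^{\deg \bold{A}_i}\bold{A}_i(X/\sigma)$ is a genuine polynomial matrix (being a sum of terms $\sigma^{\deg \bold{A}_i - k}\bold{A}_{i,k}$ over its homogeneous components $\bold{A}_{i,k}$). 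Choosing a single uniform $N$ that dominates every total-degree demand, and absorbing the residual powers of $\sigma$ into the scalar factors to equalize degrees, the outcome is
$$\sigma^N \bold{F} = \sum_{i=1}^l \Big(\sum_{j=1}^{r_i} h_{\alpha_{ij}}\, g^{\alpha_{ij}}\Big) \bold{B}_i^T \bold{B}_i,$$
with homogeneous, nonnegative-coefficient polynomials $h_{\alpha_{ij}}$ and polynomial matrices $\bold{B}_i$, which is exactly the statement.
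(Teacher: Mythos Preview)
Your proof is correct and follows essentially the same route as the paper: reduce to scalar constraints via Lemma~\ref{lm-cimpric}, adjoin the simplex constraints $\{X_1,\ldots,X_n,\,1-\sigma,\,\sigma-1\}$, obtain $\bold F\in P(G')^t$, then substitute $X\mapsto X/\sigma$ and clear denominators. The only organizational difference is that you invoke Theorem~\ref{compact-polyhedra} to get $\bold F\in P(G')^t$ in one stroke, whereas the paper re-derives the Archimedean property of $P(G')$ through dedicated Lemmas~\ref{lm5.2} and~\ref{lm5.3} before applying Scherer--Hol directly; your shortcut is legitimate since the linear part $\hat G'=\{X_1,\ldots,X_n,\,1-\sigma,\,\sigma-1\}$ cuts out the (nonempty, compact) standard simplex.
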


To give a proof for this Positivstellensatz, we need the following results for semirings in $\mathbb R[X]$.

Let $P_0$ be the set of all polynomials in $\mathbb R[X]$ with nonnegative coefficients. For $G=\{g_1,\ldots,g_m\}\subseteq \mathbb R[X]$, denote by $P(G)$ the semiring in $\mathbb R[X]$ generated by $G$. Put
$$ P_0P_G :=\Big\{\sum_{i=1}^r h_{\alpha_i} g_1^{(\alpha_i)_1}\ldots g_m^{(\alpha_i)_m}|r\in \mathbb N_0, \alpha_i \in \mathbb N_0^m, h_{\alpha_i} \in P_0\Big\}.$$
Let $\lambda >0$ such that $K(G)\cap \{\lambda - \sum_{i=1}^n X_i =0\} \not = \emptyset$. Denote 
$$G':=G\cup \{X_1,\ldots,X_n\}\cup \{\lambda - \sum_{j=1}^n X_j, \sum_{j=1}^n X_j - \lambda\}.$$
Let $P(G')$ be the semiring in $\mathbb R[X]$ generated by $G'$. 
\begin{lemma} \label{lm5.2} $P(G')=P_0P(G) + \left<\lambda - \sum_{j=1}^n X_j\right>$.
\end{lemma}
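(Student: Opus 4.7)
My plan is to verify both inclusions directly from the definitions, using the fact that $P(G')$ is the smallest semiring containing $\mathbb{R}_{\geq 0}$ and all the generators in $G'$.

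For the inclusion $P_0 P(G)+\langle \lambda-\sum_j X_j\rangle\subseteq P(G')$, I would first observe that $P_0=P(\{X_1,\ldots,X_n\})\subseteq P(G')$ because $X_1,\ldots,X_n\in G'$, and $P(G)\subseteq P(G')$ because $G\subseteq G'$; closure of $P(G')$ under products then yields $P_0\cdot P(G)\subseteq P(G')$. The key ingredient for the ideal part is that $G'$ contains \emph{both} $\lambda-\sum_j X_j$ and its negative $\sum_j X_j-\lambda$. Decomposing any $h\in\mathbb{R}[X]$ as $h=h^+-h^-$ with $h^\pm\in P_0$, one has
$$ h\Bigl(\lambda-\sum_j X_j\Bigr)=h^+\Bigl(\lambda-\sum_j X_j\Bigr)+h^-\Bigl(\sum_j X_j-\lambda\Bigr)\in P(G'), $$
so the whole ideal $\langle \lambda-\sum_j X_j\rangle$ lies inside $P(G')$. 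Closure of $P(G')$ under addition then gives the desired inclusion.

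For the reverse inclusion I would argue at the level of generator-products. Every element of $P(G')$ is, by definition, a finite sum of terms of the form
$$ c\cdot X_1^{a_1}\cdots X_n^{a_n}\cdot g_1^{b_1}\cdots g_m^{b_m}\cdot \Bigl(\lambda-\sum_j X_j\Bigr)^{p}\Bigl(\sum_j X_j-\lambda\Bigr)^{q}, $$
with $c\geq 0$ and $a_i,b_j,p,q\in\mathbb{N}_0$. Using the sign identity $(\sum_j X_j-\lambda)^q=(-1)^q(\lambda-\sum_j X_j)^q$ this collapses to $\pm\, c\, X^a g^b (\lambda-\sum_j X_j)^{p+q}$. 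When $p+q=0$, we have $cX^a\in P_0$ and the term lies in $P_0P(G)$; when $p+q\geq 1$, the term is divisible by $\lambda-\sum_j X_j$ and lies in $\langle \lambda-\sum_j X_j\rangle$. Summing over all the generator-products shows that every element of $P(G')$ decomposes as an element of $P_0P(G)$ plus an element of the ideal.

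There is no genuine technical obstacle here; the argument is essentially a bookkeeping exercise, and the only thing to notice is the design of $G'$. Including the coordinate polynomials $X_1,\ldots,X_n$ is precisely what allows arbitrary $P_0$-coefficients on $g$-monomials, while including \emph{both} $\lambda-\sum_j X_j$ and its opposite is what lets the full ideal (with arbitrary $\mathbb{R}[X]$-coefficients, not just $P_0$-coefficients) be absorbed into the semiring $P(G')$.
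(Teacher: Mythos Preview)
Your proof is correct and follows essentially the same approach as the paper's: both directions rest on writing generic elements of $P(G')$ as sums of monomials $c\,X^{a}g^{b}(\lambda-\sum X_j)^{p}(\sum X_j-\lambda)^{q}$, and on the decomposition $h=h^{+}-h^{-}$ with $h^{\pm}\in P_0$ to absorb the ideal $\langle \lambda-\sum_j X_j\rangle$ into $P(G')$. The only cosmetic difference is the order in which the two inclusions are treated, and your case split $p+q=0$ versus $p+q\geq 1$ spells out a step the paper leaves implicit.
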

\pf Since each element of $P(G')$ is a finite sum of elements of the form
$$a_{\alpha\beta\gamma} X_1^{\alpha_1}\ldots X_n^{\alpha_n} g_1^{\beta_1}\ldots g_m^{\beta_m} (\lambda - \sum_{j=1}^n X_j)^{\gamma_1}(\sum_{j=1}^nX_j -\lambda)^{\gamma_2},$$
with $a_{\alpha\beta\gamma} \geq 0, \alpha_i, \beta_j, \gamma_k\in \mathbb N_0$, we have $P(G')\subseteq P_0P(G) + \left<\lambda - \sum_{j=1}^n X_j\right>$.

Conversely, since $P_0P(G) \subseteq P(G')$, it is sufficient to prove that 
$$\left<\lambda - \sum_{j=1}^n X_j\right> \subseteq P(G').$$
 In fact, for each polynomial $p \in \mathbb R[X]$, we have 
$$p = p_+ - p_-,$$
where $p_+$ and $p_-$ are in $P_0$. Since $\lambda - \sum_{j=1}^n X_j \in P(G')$ and $\sum_{j=1}^n X_j - \lambda \in P(G')$, it is easy to verify that for every $p (\lambda - \sum_{j=1}^n X_j) \in \left<\lambda - \sum_{j=1}^n X_j\right>$ with  $p\in \mathbb R[X]$, we have
$$ p (\lambda - \sum_{j=1}^n X_j) = p_+(\lambda - \sum_{j=1}^n X_j) + p_-(\sum_{j=1}^n X_j-\lambda) \in P(G'). $$
The proof is complete.

\epf

\begin{lemma} \label{lm5.3} $P(G')$ is an Archimedean semiring, hence $P(G')^t$ is an Archimedean quadratic module in $\mathcal{M}_t(\mathbb R[X])$.
\end{lemma}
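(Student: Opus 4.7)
The plan is to apply Lemma \ref{Archimed-semiring} and then Lemma \ref{Archimed-Mn}. By Lemma \ref{Archimed-semiring}, a semiring $Q$ on $\mathbb{R}[X]$ is Archimedean if and only if there is some $\mu>0$ with $\mu \pm X_i \in Q$ for every $i=1,\ldots,n$. So it suffices to exhibit such bounds inside $P(G')$.

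First I would observe that by the definition of $G'$, each $X_i$ lies in $G' \subseteq P(G')$, and the linear form $\lambda - \sum_{j=1}^n X_j$ also lies in $G' \subseteq P(G')$. Since any semiring is closed under addition and contains $\mathbb{R}_{\geq 0}$, for each fixed $i$ I can write
\[
\lambda - X_i \;=\; \Bigl(\lambda - \sum_{j=1}^n X_j\Bigr) + \sum_{j\neq i} X_j \,\in\, P(G'),
\]
and similarly
\[
\lambda + X_i \;=\; \lambda + X_i \,\in\, P(G'),
\]
using that $\lambda \in \mathbb{R}_{\geq 0} \subseteq P(G')$ and $X_i \in P(G')$. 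Thus $\lambda \pm X_i \in P(G')$ for every $i=1,\ldots,n$, and Lemma \ref{Archimed-semiring} yields that $P(G')$ is an Archimedean semiring.

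For the second conclusion I would invoke Lemma \ref{Archimed-Mn}: since $P(G')$ is an Archimedean semiring on $\mathbb{R}[X]$, the module $P(G')^t$ is Archimedean on $\mathcal{M}_t(\mathbb{R}[X])$. To see that $P(G')^t$ is genuinely a quadratic module on $\mathcal{M}_t(\mathbb{R}[X])$ (and not merely a semiring-like object), I would use the observation made earlier in the paper: $P(G')^t$ contains $\sum_t \mathbb{R}[X]^2$ (take $m_i = 1 \in P(G')$), and it is closed under addition and conjugation $\bold{B} \mapsto \bold{B}^T(\,\cdot\,)\bold{B}$ by elements of $\mathcal{M}_t(\mathbb{R}[X])$ essentially by construction, because $(\bold{B}^T \bold{A}_i^T)(\bold{A}_i\bold{B}) = (\bold{A}_i\bold{B})^T(\bold{A}_i\bold{B})$ absorbs the conjugation into the matrix factor while leaving $m_i \in P(G')$ unchanged.

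No step here looks like a real obstacle; the argument is short, and the only thing to be careful about is not to confuse semirings with quadratic modules when verifying the Archimedean criterion — that is why I would first establish the Archimedean property for the scalar semiring $P(G')$ using the linear-form trick above, and only then transfer it to the matrix level via Lemma \ref{Archimed-Mn}.
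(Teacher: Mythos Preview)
Your argument is correct and essentially identical to the paper's own proof: both show $\lambda \pm X_i \in P(G')$ by writing $\lambda - X_i = (\lambda - \sum_j X_j) + \sum_{j\neq i} X_j$ and then invoke Lemma~\ref{Archimed-semiring}, followed by Lemma~\ref{Archimed-Mn} for the matrix level. Your additional remarks on why $P(G')^t$ is a quadratic module are a harmless elaboration of what the paper already notes in the preliminaries.
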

\begin{proof} For each $i=1,\ldots, n$, since $X_i \in  P(G')$ and $\lambda >0$, we have 
$$\lambda + X_i \in P(G'). $$
Moreover, we have 
$$\lambda - X_i = (\lambda - \sum_{i=1}^n X_i) + \sum_{i=2}^n X_i  \in P(G').$$
It follows from Lemma \ref{Archimed-semiring} that $P(G')$ is an Archimedian semiring. 
\end{proof}

\begin{proof}[\textbf{Proof of Theorem \ref{Polya-Putinar-Vasilescu}}] It follows from Lemma \ref{lm-cimpric} that there exists a finite subset $ G=\{g_1,\ldots,g_m\}$ of $\mathbb R[X]$ consisting  of homogeneous polynomials  of even degrees $d_1,\ldots, d_m$, respectively, such that 
$$ K(G) = K(\mathcal{G}) \mbox{ and } M(G)^t \subseteq \mathcal{M}(\mathcal{G}).$$
Let $\lambda >0$ such that $K(G)\cap \{\lambda - \sum_{i=1}^n X_i =0\} \not = \emptyset$. Denote 
$$G':=G\cup \{X_1,\ldots,X_n\}\cup \{\lambda - \sum_{j=1}^n X_j, \sum_{j=1}^n X_j - \lambda\}.$$
Let $P(G')$ be the semiring in $\mathbb R[X]$ generated by $G'$. It follows from Lemma \ref{lm5.2} that
$$P(G')=P_0P(G) + \left<\lambda - \sum_{j=1}^n X_j\right>,$$
and by Lemma \ref{Archimed-Mn}, we have
$$ K\big(P(G')^t\big)= K(P(G'))= K(G')= \mathbb R_+^n\cap K(G)\cap \{\lambda - \sum_{k=1}^n X_k=0\}.$$
Then, for each $x \in K\big(P(G')^t\big)$, we have $x\in \mathbb R_+^n\cap K(G)\cap \{\lambda - \sum_{k=1}^n X_k=0\}$, hence 
 $x\in R_+^n\cap K(G)\setminus \{0\}$. The hypothesis implies that 
 $ \bold{F}(x)  >0.$ Note that $P(G')^t$ is Archimedean by Lemma \ref{lm5.3}. Thus, applying the Scherer-Hol theorem we obtain 
$$\bold{F} \in P(G')^t=\Big(P_0P(G)+\left<\lambda - \sum_{k=1}^n X_k\right>\Big)^t.$$
Then $\bold{F}$ can be written as 
\begin{equation} \label{equ5.1} \bold{F} =\sum_{i=1}^l  \Big(\sum_{j=1}^r h'_{\alpha_{ij}}g^{\alpha_{ij}} + \varphi_i(\lambda - \sum_{k=1}^n X_k)\Big)\bold{B}_i^T\bold{B}_i, 
\end{equation}
with $\alpha_{ij}\in \mathbb N_0^m$, $h'_{\alpha_{ij}}\in P_0$, $g^{\alpha_{ij}}:=g_1^{(\alpha_{ij})_1}\ldots g_m^{(\alpha_{ij})_m}$, $\varphi_i\in \mathbb R[X]$,  $\bold{B}_i \in \mathcal{M}_t(\mathbb R[X])$.

Substituting each $X_i$ by $\dfrac{\lambda X_i}{\sigma}$ in both sides of (\ref{equ5.1}), where $\sigma: = \sum_{k=1}^n X_k$, observing that $$\lambda - \sum_{k=1}^n\dfrac{\lambda X_k}{\sigma} =0,  $$
$$ \bold{F}\big(\dfrac{\lambda X}{\sigma}\big) = \dfrac{\lambda^d}{\sigma^{d}}\bold{F}(X), \mbox{ and } g^{\alpha_{ij}}\big(\dfrac{\lambda X}{\sigma}\big) = \dfrac{\lambda^{p_{ij}}}{\sigma^{p_{ij}}}g^{\alpha_{ij}}(X),$$
where $p_{ij}=(\alpha_{ij})_1d_1+\ldots+(\alpha_{ij})_md_m$,  
we have 
\begin{equation}\label{equ5.2}
\dfrac{\lambda^d}{\sigma^{d}}\bold{F}(X) = \sum_{i=1}^l  \Big(\sum_{j=1}^r h'_{\alpha_{ij}}\big(\dfrac{\lambda X}{\sigma}\big)\dfrac{\lambda^{p_{ij}}}{\sigma^{p_{ij}}}g^{\alpha_{ij}}(X)\Big)\bold{B}_i^T\big(\dfrac{\lambda X}{\sigma}\big)\bold{B}_i\big(\dfrac{\lambda X}{\sigma}\big).
\end{equation}
Let 
\begin{align*}
e_1&:= \max\{\deg(h'_{\alpha_{ij}}), i=1,\ldots,l; j=1,\ldots,r\};\\
e_2&:= \max\{p_{ij}, i=1,\ldots,l; j=1,\ldots,r\};\\
e_3&:=\max\{\deg(\bold{B}_i),   i=1,\ldots,l\}.
\end{align*}
Put $N:=d +e_1 +e_2 +2e_3$, and multiplying both sides of (\ref{equ5.2}) with $\sigma^N$, we get
\begin{align*} 
 \lambda^d \sigma^{N-d} \bold{F}(X) &=  \sum_{i=1}^l  \Big(\sum_{j=1}^r\Big(\sigma^{d+e_1+e_2}\dfrac{\lambda^{p_{ij}}}{\sigma^{p_{ij}}}h'_{\alpha_{ij}}\big(\dfrac{\lambda X}{\sigma}\big)\Big)g^{\alpha_{ij}}(X)\Big)\cdot \\
 & \cdot \Big(\sigma^{e_3}\bold{B}_i^T\big(\dfrac{\lambda X}{\sigma}\big)\Big)\Big(\sigma^{e_3}\bold{B}_i\big(\dfrac{\lambda X}{\sigma}\big)\Big).
\end{align*}
Note that $\bold{A}_i:=\lambda^{-d}\sigma^{e_3}\bold{B}_i\big(\dfrac{\lambda X}{\sigma}\big) \in \mathcal{M}_t(\mathbb R[X])$. Moreover, consider the polynomial 
$$ h''_{\alpha_{ij}}(X)=\sigma^{d+e_1+e_2}\dfrac{\lambda^{p_{ij}}}{\sigma^{p_{ij}}}h'_{\alpha_{ij}}\big(\dfrac{\lambda X}{\sigma}\big). $$
For any $\mu \in \mathbb R, \mu \not =0$, we have 
$$h''_{\alpha_{ij}}(\mu X) =\mu^{d+e_1+e_2-p_{ij}}\sigma^{d+e_1+e_2}\dfrac{\lambda^{p_{ij}}}{\sigma^{p_{ij}}}h'_{\alpha_{ij}}\big(\dfrac{\lambda X}{\sigma}\big) = \mu^{d+e_1+e_2-p_{ij}}h''_{\alpha_{ij}}(X). $$
It follows that $h''_{\alpha_{ij}}$ is a homogeneous polynomial of degree $d+e_1+e_2-p_{ij}$. Since $h'_{\alpha_{ij}}$ has nonnegative coefficients, so does  $h''_{\alpha_{ij}}$. Denote $h_{\alpha_{ij}}=\dfrac{h''_{\alpha_{ij}}}{\lambda^d}$. Then $h_{\alpha_{ij}}$ is homogeneous with nonnegative coefficients, and 
$$ \sigma^{N-d} \bold{F} = \sum_{i=1}^l  \Big(\sum_{j=1}^r h_{\alpha_{ij}} g^{\alpha_{ij}}\Big)\bold{A}_i^T \bold{A}_i. $$
This completes the proof.
\end{proof}

 In the case $\mathcal{G}=\emptyset$, we have the following matrix version of the  \textit{P\'olya Positivstellensatz}. 
\begin{corollary}
Let $\bold{F}\in  \mathcal{S}_t(\mathbb R[X])$ be a homogeneous polynomial matrix of even degree $d$.  If $\bold{F}(x)>0$ for all   $x\in \mathbb R_+^n\setminus \{0\}$, then there exists a number $N>0$, homogeneous polynomials $h_{i}$ with nonnegative coefficients and polynomial matrices $\bold{A}_i \in  \mathcal{M}_t(\mathbb R[X])$, for $ i=1,\ldots, l$, such that    
$$ (\sum_{i=1}^n X_i)^N \bold{F} = \sum_{i=1}^l h_i\bold{A}_i^T\bold{A}_i. $$ 
\end{corollary}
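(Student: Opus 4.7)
The plan is to derive this corollary directly as the special case $\mathcal{G}=\emptyset$ of Theorem \ref{Polya-Putinar-Vasilescu}. When $\mathcal{G}=\emptyset$ one has $K(\mathcal{G})=\mathbb R^n$, so the hypothesis that $\bold{F}>0$ on $\mathbb R_+^n\setminus\{0\}$ is precisely the hypothesis $\bold{F}>0$ on $\mathbb R_+^n\cap K(\mathcal{G})\setminus\{0\}$ demanded there. Applying Lemma \ref{lm-cimpric} with $\mathcal{G}=\emptyset$ one may take $G=\emptyset$; in the conclusion of Theorem \ref{Polya-Putinar-Vasilescu} each product $g^{\alpha_{ij}}$ then reduces to the empty product $1$, and the inner sum $\sum_{j=1}^{r}h_{\alpha_{ij}}g^{\alpha_{ij}}$ collapses to a single homogeneous polynomial $h_i$ with nonnegative coefficients. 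What remains is exactly the representation
$$ \Bigl(\sum_{i=1}^{n}X_i\Bigr)^N \bold{F}=\sum_{i=1}^{l} h_i \bold{A}_i^T\bold{A}_i $$
claimed by the corollary.

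For a self-contained argument one could instead inline the specialization. Pick $\lambda>0$ so that $\mathbb R_+^n\cap\{\sum_j X_j=\lambda\}$ is nonempty (any $\lambda>0$ works), set
$$ G':=\{X_1,\ldots,X_n,\lambda-\sum_j X_j,\sum_j X_j-\lambda\}, $$
and observe by Lemma \ref{lm5.2} that $P(G')=P_0+\left<\lambda-\sum_j X_j\right>$ and by Lemma \ref{lm5.3} that $P(G')^t$ is an Archimedean quadratic module with $K\bigl(P(G')^t\bigr)=\mathbb R_+^n\cap\{\sum_k X_k=\lambda\}$. On this compact subset of $\mathbb R_+^n\setminus\{0\}$ the matrix $\bold{F}$ is positive definite by hypothesis, so the Scherer-Hol theorem yields
$$ \bold{F}=\sum_{i=1}^{l}\left(h'_i+\varphi_i\left(\lambda-\sum_k X_k\right)\right)\bold{B}_i^T\bold{B}_i $$
with $h'_i\in P_0$, $\varphi_i\in\mathbb R[X]$, and $\bold{B}_i\in\mathcal{M}_t(\mathbb R[X])$.

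To return to a homogeneous identity, the plan is to substitute $X_i\mapsto \lambda X_i/\sigma$ with $\sigma:=\sum_k X_k$; the terms carrying $\lambda-\sum_k X_k$ vanish, and clearing denominators by a sufficiently high power $\sigma^N$ while tracking degrees exactly as in the proof of Theorem \ref{Polya-Putinar-Vasilescu} produces the claimed identity with each $h_i$ homogeneous of common degree and with nonnegative coefficients. I do not anticipate any genuine obstacle here: the result is an immediate specialization, and the only mild bookkeeping point is the harmless convention that $P(\emptyset)=\mathbb R_{\geq 0}$, whence $P_0\cdot P(\emptyset)=P_0$ and the ``$g^{\alpha_{ij}}$'' factors disappear cleanly.
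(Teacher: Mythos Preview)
Your proposal is correct and follows essentially the same approach as the paper: both derive the corollary as the specialization $\mathcal{G}=\emptyset$ of Theorem \ref{Polya-Putinar-Vasilescu}, noting that one may take $G=\emptyset$, that $P(\emptyset)=\mathbb R_{\geq 0}$ so $P_0P(\emptyset)=P_0$, and hence $P(G')=P_0+\left<\lambda-\sum_k X_k\right>$, which makes the $g^{\alpha_{ij}}$ factors disappear. Your optional self-contained inlining simply spells out the same computation as in the proof of Theorem \ref{Polya-Putinar-Vasilescu} in this special case.
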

\begin{proof}
The result  follows from the proof of Theorem \ref{Polya-Putinar-Vasilescu}, with the fact that when $\mathcal{G}=\emptyset$, we have $G=\emptyset$ and $P(\emptyset)=\mathbb R_{\geq 0} $ - the set of non-negative real numbers, and  $P(G')=P_0+\left<\lambda - \sum_{k=1}^n X_k\right>$.
\end{proof}

In the following we give a non-homogeneous version of the  P\'olya-Putinar-Vasilescu Positivstellensatz for polynomial matrices, whose proof is similar to that of Corollary \ref{Putinar-Vasilescu-nonhomo}.
\begin{corollary}
Let $\mathcal{G}\subseteq \mathcal{M}_t(\mathbb R[X])$ be a finite set of polynomial matrices of even degrees.   Let $\bold{F}\in  \mathcal{S}_t(\mathbb R[X])$ be a  polynomial matrix of even degree.  Denote $\widetilde{\mathcal{G}}:=\{\widetilde{\bold{G}}| \bold{G} \in \mathcal{G}\}\subseteq \mathcal{M}_t(\mathbb R[X_0,X_1,\ldots,X_n]).$
If $\widetilde{\bold{F}}(x)>0$ for all   $x\in \mathbb R_{+}^{n+1} \cap K(\widetilde{\mathcal{G}})\setminus \{0\}$, then there exist a finite set $G=\{g_1,\ldots,g_m\}\subseteq \mathbb R[X]$ consisting of    polynomials  of even degrees, a number $N>0$,   polynomials $h_{\alpha_{ij}}$ with nonnegative coefficients, and polynomial matrices $\bold{A}_i \in  \mathcal{M}_t(\mathbb R[X])$, for $ i=1,\ldots, l; j=1,\ldots, r$, such that    
$$ (1+\sum_{i=1}^n X_i)^N \bold{F} = \sum_{i=1}^l \Big(\sum_{j=1}^r h_{\alpha_{ij}}g^{\alpha_{ij}}\Big)\bold{A}_i^T\bold{A}_i, $$ 
where  $\alpha_{ij}\in \mathbb N_0^m $, $g^{\alpha_{ij}}:=g_1^{(\alpha_{ij})_1}\ldots g_m^{(\alpha_{ij})_m}$.
\end{corollary}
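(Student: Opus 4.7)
The plan is to mirror the strategy of Corollary \ref{Putinar-Vasilescu-nonhomo}, reducing the non-homogeneous statement to the already-proved homogeneous Theorem \ref{Polya-Putinar-Vasilescu} via the homogenization trick. First I would observe that since $\bold{F}$ and every $\bold{G}\in\mathcal{G}$ have even degrees, their homogenizations $\widetilde{\bold{F}}$ and $\widetilde{\bold{G}}$ in $\mathcal{M}_t(\mathbb R[X_0,X_1,\ldots,X_n])$ are homogeneous of the same (even) degrees. Hence the hypothesis that $\widetilde{\bold{F}}(x)>0$ for all $x\in \mathbb R_{+}^{n+1}\cap K(\widetilde{\mathcal{G}})\setminus\{0\}$ is precisely what Theorem \ref{Polya-Putinar-Vasilescu} requires of the homogeneous family $\widetilde{\mathcal{G}}$ living in the ring with one extra variable $X_0$.

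Next I would apply Theorem \ref{Polya-Putinar-Vasilescu} directly to $\widetilde{\bold{F}}$ and $\widetilde{\mathcal{G}}$ to obtain a finite set $\widetilde{G}=\{\tilde g_1,\ldots,\tilde g_m\}\subseteq \mathbb R[X_0,\ldots,X_n]$ of homogeneous polynomials of even degrees, an integer $N>0$, homogeneous polynomials $\tilde h_{\alpha_{ij}}$ with nonnegative coefficients, and polynomial matrices $\widetilde{\bold{A}}_i\in \mathcal{M}_t(\mathbb R[X_0,\ldots,X_n])$ with
\begin{equation*}
\Bigl(\sum_{i=0}^{n}X_i\Bigr)^{N}\widetilde{\bold{F}}
=\sum_{i=1}^{l}\Bigl(\sum_{j=1}^{r}\tilde h_{\alpha_{ij}}\,\tilde g^{\alpha_{ij}}\Bigr)\widetilde{\bold{A}}_i^{T}\widetilde{\bold{A}}_i .
\end{equation*}
Setting $X_0=1$ on both sides yields the identity in the statement, with $g_k(X):=\tilde g_k(1,X_1,\ldots,X_n)$, $h_{\alpha_{ij}}(X):=\tilde h_{\alpha_{ij}}(1,X_1,\ldots,X_n)$, and $\bold{A}_i(X):=\widetilde{\bold{A}}_i(1,X_1,\ldots,X_n)$, and the coefficient of $X_0$ collapses to $1$, producing the factor $(1+\sum_{i=1}^{n}X_i)^{N}$ on the left.

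The only points to check carefully are that the derived data inherit the required structural properties. Since $\tilde h_{\alpha_{ij}}$ has nonnegative coefficients, the specialization $X_0=1$ only amounts to summing nonnegative coefficients, so $h_{\alpha_{ij}}$ still has nonnegative coefficients; and since homogenization is an involution up to $X_0=1$, the polynomials $\widetilde{g_k}$ recover $\tilde g_k$ up to a power of $X_0$, so the degree-parity and the inclusion structure pass down as needed. I expect no serious obstacle here because everything is formal substitution on top of Theorem \ref{Polya-Putinar-Vasilescu}; the one subtlety to state explicitly is that even degrees of the $g_k$ may drop by an odd amount after substitution, but since the statement only requires the $g_k$ to have even degrees as elements of $\mathbb R[X]$ and homogenization preserves degree, this will follow once we select $\widetilde{G}$ with even degrees in the first application.
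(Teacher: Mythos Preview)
Your proposal is correct and follows essentially the same approach the paper indicates: the paper states that the proof is ``similar to that of Corollary \ref{Putinar-Vasilescu-nonhomo}'', i.e.\ apply the homogeneous Theorem \ref{Polya-Putinar-Vasilescu} to $\widetilde{\bold{F}}$ and $\widetilde{\mathcal{G}}$ in $\mathbb R[X_0,\ldots,X_n]$ and then specialize $X_0=1$. The subtlety you flag about the parity of $\deg g_k$ after dehomogenization is not addressed in the paper either; it is inherited from the analogous step in Corollary \ref{Putinar-Vasilescu-nonhomo}.
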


\section{Approximating positive semi-definite polynomial matrices using sums of squares }
Marshall (2003) proved the following theorem, which approximates non-negative polynomials on basic closed semi-algebraic sets.
\begin{theorem}[{\cite[Coro. 4.3]{Mar}}]
Let $G$ be a finite subset of $\mathbb R[X]:=\mathbb R[X_1,\ldots,X_n]$ and $f\in \mathbb R[X]$. The following are equivalent:
\begin{itemize}
\item[(1)] $f(x)\geq 0$ for every $x\in K(G)$.
\item[(2)] There exists an integer  $k\geq 0 $ such that for all rational $\epsilon >0$, there exists an integer $l\geq 0$ satisfying $p^l(f+\epsilon p^k) \in M(G)$, where $p=1+\displaystyle\sum_{i=1}^nX_i^2$.
\end{itemize}
\end{theorem}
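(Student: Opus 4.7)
The plan is to separate the two implications, with the Putinar--Vasilescu Positivstellensatz (the scalar $t=1$ case of Corollary \ref{Putinar-Vasilescu-nonhomo}) doing the heavy lifting for the non-trivial direction. For $(2)\Rightarrow(1)$: every element of $M(G)$ is non-negative on $K(G)$, and $p=1+\sum_i X_i^2\geq 1$ everywhere, so $p^l(f+\epsilon p^k)\in M(G)$ forces $f+\epsilon p^k\geq 0$ on $K(G)$. Letting $\epsilon\to 0^+$ through positive rationals gives $f\geq 0$ on $K(G)$.

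For $(1)\Rightarrow(2)$, first normalize parities: whenever $g_i\in G$ has odd degree, replace it by $p\cdot g_i$, which has even degree, does not alter $K(G)$ since $p>0$ everywhere, and already lies in the original $M(G)$ (because $M(G)$ is closed under multiplication by squares and $p$ is a sum of squares). We may thus assume every $g_i$ has even degree. Next, fix a positive integer $k$ with $2k>\deg f$. Then $F_\epsilon:=f+\epsilon p^k$ has even degree exactly $2k$ and its degree-$2k$ leading form is $\epsilon\bigl(\sum_i X_i^2\bigr)^k$, which is strictly positive on $\mathbb R^n\setminus\{0\}$ independently of $\epsilon$. Introduce an auxiliary variable $X_0$ and form the homogenizations $\widetilde{F_\epsilon}(X_0,X)=X_0^{2k}F_\epsilon(X/X_0)$ and $\widetilde{g_i}$. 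The key verification is strict positivity of $\widetilde{F_\epsilon}$ on $K(\widetilde{G})\setminus\{0\}$: on the slice $\{X_0=0\}$ only the leading form survives and equals $\epsilon\bigl(\sum_i X_i^2\bigr)^k>0$ for $X\ne 0$; on $\{X_0\ne 0\}$, the even-degree property makes $(X_0,X)\in K(\widetilde{G})$ equivalent to $X/X_0\in K(G)$, whence by hypothesis $(1)$ we have $f(X/X_0)\geq 0$ and therefore $F_\epsilon(X/X_0)\geq\epsilon\, p(X/X_0)^k>0$. Corollary \ref{Putinar-Vasilescu-nonhomo} applied with $t=1$ then produces an integer $N=N(\epsilon)>0$ with $p^N(f+\epsilon p^k)\in M(G)$, giving $(2)$ with $l=N$ and the predetermined $k$.

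The main obstacle is the positivity check at infinity, i.e.\ on the hyperplane $\{X_0=0\}$ after homogenization. This is what forces $k$ to strictly dominate $\deg f/2$, so that the leading form of $F_\epsilon$ is unambiguously the positive definite form $\epsilon\bigl(\sum_i X_i^2\bigr)^k$; with $2k=\deg f$ the degree-$2k$ component of $f$ could compete with $\epsilon\bigl(\sum_i X_i^2\bigr)^k$ for small $\epsilon$ and destroy positivity on that hyperplane, so the uniform-in-$\epsilon$ choice of $k$ would fail. The parity normalization of the $g_i$ is a minor but necessary bookkeeping step to fit the even-degree hypothesis of Corollary \ref{Putinar-Vasilescu-nonhomo}; once it is in place, the entire argument collapses to a single application of that corollary.
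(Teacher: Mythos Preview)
The paper does not actually prove this statement: it is quoted verbatim from Marshall \cite[Coro.~4.3]{Mar} as background at the start of Section~6, and the paper then moves on to prove its own matrix analogues (Theorem~\ref{Marshall} and Corollary~\ref{coroMarshall}). So there is no ``paper's own proof'' to compare against.

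That said, your argument is correct and self-contained within the paper's framework. The implication $(2)\Rightarrow(1)$ is immediate as you wrote. For $(1)\Rightarrow(2)$, the parity normalization $g_i\mapsto p\,g_i$ is legitimate since it shrinks the quadratic module while preserving $K(G)$, and your choice $2k>\deg f$ is exactly what pins down the leading form of $F_\epsilon$ as $\epsilon(\sum_i X_i^2)^k$, making the positivity check on $\{X_0=0\}$ go through uniformly in $\epsilon$. The verification on $\{X_0\neq 0\}$ uses the even-degree assumption on the $g_i$ in the right place. The appeal to Corollary~\ref{Putinar-Vasilescu-nonhomo} with $t=1$ then lands $p^N F_\epsilon$ in $\mathcal{M}(\mathcal{G})=M(G)$ (the scalar quadratic module), and since the modified generators $p\,g_i$ already lie in the original $M(G)$, the conclusion holds for the original $G$ as well. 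One cosmetic point: the corollary promises only ``a number $N>0$'', but the construction in the proof of Theorem~\ref{Putinar-Vasilescu} makes $N$ a non-negative integer, and in any case multiplying by an extra power of $p\in\sum\mathbb R[X]^2$ preserves membership in $M(G)$, so rounding $N$ up is harmless.

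It is worth noting that your route---deduce scalar Marshall from scalar Putinar--Vasilescu via the $\epsilon p^k$ perturbation---is morally the same mechanism the paper uses for its matrix version, except that the paper (in Theorem~\ref{Marshall}) re-runs the sphere-restriction and dehomogenization argument from scratch using Corollary~\ref{coroScherer-Hol}, whereas you invoke the already-packaged Corollary~\ref{Putinar-Vasilescu-nonhomo}. Your approach is slightly more economical for the scalar case.
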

In this section we give a matrix version of this theorem, approximating positive semi-definite polynomial matrices using sums of squares. The first version is established for homogeneous polynomial matrices, as follows.

\begin{theorem}\label{Marshall}
Let $\mathcal{G}\subseteq \mathcal{M}_t(\mathbb R[X])$ be a finite set of homogeneous polynomial matrices of even degrees.   Let $\bold{F}\in  \mathcal{S}_t(\mathbb R[X])$ be a homogeneous polynomial matrix of even degree $d>0$.  If $\bold{F}(x)\geq 0$ for all   $x\in K(\mathcal{G})$, then there exist a finite set $G$ of homogeneous polynomials in $\mathbb R[X]$ of even degrees   and a number $\lambda >0$ such that for every $\epsilon >0$, there exists a number $N>0$  satisfying 
$$ \sigma^{N-d/2} (\bold{F}+\dfrac{\epsilon}{\lambda^d}\sigma^{d/2}\bold{I}) \in M(G)^t \subseteq \mathcal{M}(\mathcal{G}),$$ 
where $\sigma=\sum_{i=1}^n X_i^2$.
\end{theorem}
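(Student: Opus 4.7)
The plan is to adapt the proof of Theorem \ref{Putinar-Vasilescu}, replacing the direct application of the Scherer--Hol theorem by its consequence, Corollary \ref{coroScherer-Hol}; the only new bookkeeping needed is to track the perturbation term $\epsilon\bold{I}$ through the substitution-and-scaling step.

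First I would invoke Lemma \ref{lm-cimpric} to get a finite set $G=\{g_1,\ldots,g_m\}\subseteq\mathbb R[X]$ of homogeneous polynomials of even degrees $d_1,\ldots,d_m$ with $K(G)=K(\mathcal{G})$ and $M(G)^t\subseteq\mathcal{M}(\mathcal{G})$. Then I pick $\lambda>0$ so that $K(G)\cap \mathbb S(0;\lambda^2)\neq\emptyset$ and set $G':=G\cup\{\lambda^2-\sigma,\sigma-\lambda^2\}$. Because $\lambda^2-\sigma\in M(G')$, Lemmas \ref{Archimed-semiring} and \ref{Archimed-Mn} make $M(G')^t$ into an Archimedean quadratic module on $\mathcal{M}_t(\mathbb R[X])$ with $K(M(G')^t)=K(G)\cap\mathbb S(0;\lambda^2)$. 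Since $\bold{F}(x)\geq 0$ on $K(G)$ and hence on this smaller set, Corollary \ref{coroScherer-Hol} yields, for each fixed $\epsilon>0$, a representation
\[
\bold{F}+\epsilon\bold{I}=\sum_{i=1}^l\Bigl(\sigma_{i0}+\sum_{j=1}^m\sigma_{ij}g_j\Bigr)\bold{A}_i^T\bold{A}_i+\sum_{i=1}^l h_i(\lambda^2-\sigma)\bold{A}_i^T\bold{A}_i,
\]
with $\sigma_{ij}\in\sum\mathbb R[X]^2$, $h_i\in\mathbb R[X]$, $\bold{A}_i\in\mathcal{M}_t(\mathbb R[X])$.

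The second stage performs the same substitution $X_i\mapsto \lambda X_i/\sqrt{\sigma}$ used in Theorem \ref{Putinar-Vasilescu}: the ideal-type terms $h_i(\lambda^2-\sigma)$ vanish, and the homogeneity of $\bold{F}$ and of the $g_j$ supplies the factors $\lambda^d\sigma^{-d/2}$ and $\lambda^{d_j}\sigma^{-d_j/2}$ respectively. Multiplying through by a sufficiently large power $\sigma^N$, with $N$ determined from the degrees of the $\sigma_{ij}$'s and $\bold{A}_i$'s exactly as in that proof, clears all fractional powers of $\sigma$ and turns each transformed summand on the right into an element of $M(G)^t$, while the left-hand side becomes
\[
\lambda^d\sigma^{N-d/2}\bold{F}+\epsilon\sigma^N\bold{I}=\lambda^d\sigma^{N-d/2}\Bigl(\bold{F}+\tfrac{\epsilon}{\lambda^d}\sigma^{d/2}\bold{I}\Bigr).
\]
Because $d$ is even, $\lambda^{-d}=(\lambda^{-d/2})^2$, so dividing by $\lambda^d$ preserves membership in $M(G)^t$ and yields the desired inclusion.

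The main obstacle is not any single hard step but rather the clean propagation of the extra $\epsilon\bold{I}$: since it is a constant matrix it is untouched by the substitution, and multiplication by $\sigma^N$ converts it into $\epsilon\sigma^N\bold{I}$, which factors against $\lambda^d\sigma^{N-d/2}\bold{F}$ to produce exactly the $\tfrac{\epsilon}{\lambda^d}\sigma^{d/2}\bold{I}$ term predicted by the statement. Note also that $G$ and $\lambda$ are chosen once and for all before $\epsilon$ is fixed, while $N$ is allowed to depend on $\epsilon$ (through the degrees of the $\sigma_{ij}$'s and $\bold{A}_i$'s produced by Corollary \ref{coroScherer-Hol}); everything else is a repetition of the algebraic manipulations already used in Theorem \ref{Putinar-Vasilescu}.
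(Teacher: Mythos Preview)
Your proposal is correct and follows essentially the same route as the paper's own proof: reduce to scalar constraints via Lemma~\ref{lm-cimpric}, intersect with a sphere to make the quadratic module Archimedean, apply Corollary~\ref{coroScherer-Hol} to $\bold{F}+\epsilon\bold{I}$, then perform the substitution $X_i\mapsto\lambda X_i/\sqrt{\sigma}$ and clear denominators with a suitable $\sigma^N$. Your remark that $G$ and $\lambda$ are fixed in advance while $N$ is allowed to depend on $\epsilon$ is exactly the correct quantifier dependence, and your tracking of the $\epsilon\bold{I}$ term through the substitution matches the paper's computation line for line.
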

\begin{proof} The existence of the  set $G=\{g_1,\ldots,g_m\}$ of homogeneous polynomials in $\mathbb R[X]$ of even degrees $d_1,\ldots,d_m$, respectively,   satisfying $K(G)=K(\mathcal{G})$ and $M(G)^t\subseteq \mathcal{M}(\mathcal{G})$ is given in the proof of Theorem \ref{Putinar-Vasilescu}.

Let $\lambda >0$ such that $K(G)\cap \mathbb  S \not = \emptyset$. 
Denote 
$$G'=G\cup \{\lambda^2 - \sum_{i=1}^n X_i^2, \sum_{i=1}^n X_i^2 - \lambda^2\}. $$
Then $K(G')=K(G)\cap \mathbb S$, and $M(G')=M(G)+\left<\lambda^2-\sum_{i=1}^n X_i^2\right>$ which is Archimedean.  Then the quadratic module $M(G')^t$ is also Archimedean, and  
$$ K\big(M(G')^t\big)=K(M(G')) = K(G')=K(G)\cap \mathbb S.$$
For any $x \in  K\big(M(G')^t\big)$, we have $x\in K(G)\cap \mathbb S$, hence $x\in K(G)$. Then $ \bold{F}(x) \geq 0. $ It follows from Corollary \ref{coroScherer-Hol}  that for every $\epsilon >0$,  $\bold{F} + \epsilon \bold{I} \in  M(G')^t$, i.e.  $\bold{F} + \epsilon \bold{I}$ can be expressed as 
\begin{align}
\bold{F} + \epsilon \bold{I}&=\sum_{i=1}^l \big(\sigma_{i0}(X) + \sum_{j=1}^m \sigma_{ij}(X)g_j(X)\big)\bold{A}^T_i(X)\bold{A}_i(X)+ \nonumber \\
& + \sum_{i=1}^l h_i(X)(\lambda^2 - \sum_{j=1}^nX_j^2)\bold{A}^T_i(X)\bold{A}_i(X), \label{equ6.1}
\end{align}
where $\sigma_{ij}\in \sum \mathbb R[X]^2$, $h_i\in \mathbb R[X]$, $\bold{A}_i \in \mathcal{M}_t(\mathbb R[X])$. 

Substituting each $X_i$ by $\dfrac{\lambda X_i}{\sqrt{\sigma}}$ in both sides of (\ref{equ6.1}), where $\sigma: = \sum_{j=1}^n X_j^2$, observing that $$\lambda^2 - \sum_{j=1}^n\big(\dfrac{\lambda X_i}{\sqrt{\sigma}}\big)^2 =0,  $$
$$ \bold{F}\big(\dfrac{\lambda X}{\sqrt{\sigma}}\big) = \dfrac{\lambda^d}{\sigma^{d/2}}\bold{F}(X), \mbox{ and } g_j\big(\dfrac{\lambda X}{\sqrt{\sigma}}\big) = \dfrac{\lambda^{d_j}}{\sigma^{d_j/2}}g_j(X),$$
we have 
\begin{equation} \label{equ6.2}
\dfrac{\lambda^d}{\sigma^{d/2}}\bold{F}(X) + \epsilon \bold{I}=\sum_{i=1}^l \big(\sigma_{i0}\big(\dfrac{\lambda X}{\sqrt{\sigma}}\big) + \sum_{j=1}^m\dfrac{\lambda^{d_j}}{\sigma^{d_j/2}}\sigma_{ij}\big(\dfrac{\lambda X}{\sqrt{\sigma}}\big)g_j(X)\big)\bold{A}^T_i\big(\dfrac{\lambda X}{\sqrt{\sigma}}\big)\bold{A}_i\big(\dfrac{\lambda X_i}{\sqrt{\sigma}}\big).
\end{equation}
Denote 
\begin{align*}
e_1&:= \max\{\deg(\sigma_{ij}), j=0,\ldots,m\},\\
e_2&:= \max\{d_j, j=1,\ldots,m\},\\
e_3&:= \max\{\deg({\bold{A}_i}), i=1,\ldots, l\},
\end{align*}
which are even numbers. Put $N:=d/2+e_1/2+e_2/2+e_3$, and multiplying both sides of (\ref{equ6.2}) for $\sigma^N$, we have
\begin{align*} \label{equ6.3}
\lambda^d \sigma^{N-d/2}\bold{F}(X)&+ \epsilon\sigma^N \bold{I}=\sigma^{d/2}\sum_{i=1}^l \Bigg(\sigma^{e_1/2+e_2/2}\sigma_{i0}\big(\dfrac{\lambda X}{\sqrt{\sigma}}\big) + \\
& + \sum_{j=1}^m\lambda^{d_j}(\sigma^{e_1/2}\sigma_{ij}(\dfrac{\lambda X}{\sqrt{\sigma}}))\sigma^{e_2/2-d_j/2}g_j(X)\Bigg)\sigma^{e_3}\bold{A}^T_i\big(\dfrac{\lambda X}{\sqrt{\sigma}}\big)\bold{A}_i\big(\dfrac{\lambda X_i}{\sqrt{\sigma}}\big).
\end{align*}
Since
$ \sigma'_{i0}:= \sigma^{e_1/2+e_2/2}\sigma_{i0}\big(\dfrac{\lambda X}{\sqrt{\sigma}}\big)$ and $\sigma'_{ij}:= \lambda^{d_j}(\sigma^{e_1/2}\sigma_{ij}(\dfrac{\lambda X}{\sqrt{\sigma}}))\sigma^{e_2/2-d_j/2}$ are sums of squares in $\mathbb R[X]$, and 
$\bold{B}_i:=\sigma^{e_3/2}\bold{A}_i\big(\dfrac{\lambda X}{\sqrt{\sigma}}\big) \in  \mathcal{M}_t(\mathbb R[X]),$ we have 
$$\sigma^{N-d/2}(\bold{F}+ \dfrac{\epsilon}{\lambda^d}\sigma^{d/2} \bold{I})=\sigma^{N-d/2}\bold{F}(X)+ \dfrac{\epsilon}{\lambda^d}\sigma^N \bold{I} \in M(G)^t\subseteq \mathcal{M}(\mathcal{G}).$$ 
The proof is complete.
\end{proof}
A non-homogeneous version of Theorem \ref{Marshall} is given as follows, whose proof is similar to that of Corollary \ref{Putinar-Vasilescu-nonhomo}.
\begin{corollary}\label{coroMarshall}
Let $\mathcal{G}\subseteq \mathcal{M}_t(\mathbb R[X])$ be a finite set of   polynomial matrices of even degrees.   Let $\bold{F}\in  \mathcal{S}_t(\mathbb R[X])$ be a   polynomial matrix of even degree $d>0$.  If $\widetilde{\bold{F}}(x)\geq 0$ for all   $x\in K(\widetilde{\mathcal{G}})$, then there exist a finite set $G$ of   polynomials  in $\mathbb R[X]$ of even degrees   and a number $\lambda >0$ such that for every $\epsilon >0$, there exists a number $N>0$  satisfying 
$$ (1+\sigma)^{N-d/2} (\bold{F}+\dfrac{\epsilon}{\lambda^d}(1+\sigma)^{d/2}\bold{I}) \in M(G)^t \subseteq \mathcal{M}(\mathcal{G}),$$ 
where $\sigma=\sum_{i=1}^n X_i^2$.
\end{corollary}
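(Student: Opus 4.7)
The plan is to follow the homogenization/dehomogenization strategy used to derive Corollary \ref{Putinar-Vasilescu-nonhomo} from Theorem \ref{Putinar-Vasilescu}, now invoking the homogeneous matrix Marshall theorem (Theorem \ref{Marshall}) as the input. First I would verify the hypotheses of Theorem \ref{Marshall} for the homogenized data in $\mathbb R[X_0,X_1,\ldots,X_n]$: by construction, $\widetilde{\bold F}$ is homogeneous of even degree $d$, each $\widetilde{\bold G}$ for $\bold G\in\mathcal G$ is homogeneous of the even degree $\deg \bold G$, and the standing assumption $\widetilde{\bold F}(x)\ge 0$ on $K(\widetilde{\mathcal G})$ is precisely what Theorem \ref{Marshall} demands.

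Applying Theorem \ref{Marshall} in the ring $\mathbb R[X_0,X_1,\ldots,X_n]$ then produces a finite set $\widetilde G$ of homogeneous polynomials of even degrees, a number $\lambda>0$, and for each $\epsilon>0$ an integer $N>0$, such that
$$\tilde\sigma^{\,N-d/2}\Bigl(\widetilde{\bold F}+\tfrac{\epsilon}{\lambda^d}\,\tilde\sigma^{\,d/2}\bold I\Bigr)\in M(\widetilde G)^{t}\subseteq \mathcal M(\widetilde{\mathcal G}),$$
where $\tilde\sigma:=\sum_{i=0}^{n}X_i^{2}$. The crux of the argument is then to dehomogenize by evaluating at $X_{0}=1$. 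Setting $G:=\{g(1,X_1,\ldots,X_n)\mid g\in\widetilde G\}$, the evaluation map sends $\tilde\sigma\mapsto 1+\sigma$, $\widetilde{\bold F}\mapsto \bold F$, and each $\widetilde{\bold G}\mapsto \bold G$. Because it is a ring homomorphism $\mathbb R[X_0,\ldots,X_n]\to \mathbb R[X]$ (componentwise on matrices), it preserves sums of squares and conjugations of the form $\bold A^{T}(\cdot)\bold A$, so it carries $M(\widetilde G)^{t}$ into $M(G)^{t}$ and $\mathcal M(\widetilde{\mathcal G})$ into $\mathcal M(\mathcal G)$. The displayed containment therefore transports to
$$(1+\sigma)^{N-d/2}\Bigl(\bold F+\tfrac{\epsilon}{\lambda^d}(1+\sigma)^{d/2}\bold I\Bigr)\in M(G)^{t}\subseteq \mathcal M(\mathcal G),$$
which is exactly the claim.

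I expect no substantive obstacle once Theorem \ref{Marshall} is in hand: the argument is the same bookkeeping used in the passage from Theorem \ref{Putinar-Vasilescu} to Corollary \ref{Putinar-Vasilescu-nonhomo}. The only minor technical points to verify are that the constant $\lambda$ from the homogeneous theorem is unaffected by the substitution (it appears only as a scalar prefactor, hence the same $\lambda$ serves the non-homogeneous statement), and that the evaluation $X_0=1$ preserves the quadratic-module structure ring-theoretically, which is immediate from its being a homomorphism.
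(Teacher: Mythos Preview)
Your proposal is correct and follows precisely the approach the paper intends: it explicitly states that the proof of Corollary~\ref{coroMarshall} ``is similar to that of Corollary~\ref{Putinar-Vasilescu-nonhomo},'' i.e., apply the homogeneous result (here Theorem~\ref{Marshall}) to $\widetilde{\bold F}$ and $\widetilde{\mathcal G}$ in $\mathbb R[X_0,\ldots,X_n]$ and then dehomogenize via $X_0\mapsto 1$, noting that this evaluation sends $\tilde\sigma$ to $1+\sigma$ and carries the relevant quadratic modules forward. Your remarks on why $\lambda$ survives unchanged and why the evaluation preserves the quadratic-module structure are exactly the bookkeeping the paper leaves implicit.
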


\subsection*{Acknowledgements} The third author would like to express his sincere  gratitude to Prof. Konrad Schm\"udgen for fruitful discussions on representation theory for the algebra of matrices. This paper was finished during the visit of the  second and the third authors   at the Vietnam Institute for Advanced Study in Mathematics (VIASM). They thanks VIASM for financial support and hospitality.


\end{document}